\numberwithin{equation}{section}
\numberwithin{figure}{section}
\def\H{{\mathcal H}}
\def\m{{\mathfrak m}}
\def\MM{{\mathfrak M}}
\def\S{{\mathcal S}}
\def\IS{{\mathbb S}}
\def\tr{{\rm Tr}}
\def\1{{\mathbbm 1}}
\def\N{{\mathbb{N}}}
\def\R{{\mathbb{R}}}
\def\C{{\mathbb{C}}}
\def\D{{\mathcal D}}
\def\E{{\mathbb{E}}}
\def\EE{{\mathcal{E}}}
\def\P{{\mathbb{P}}}
\def\PP{{\mathcal{P}}}
\def\U{{\mathbb{U}}}
\def\X{{\mathcal{X}}}
\def\XX{{\mathbb{X}}}
\def\HH{\mathbb{H}}
\def\T{{\mathcal T}}
\def\B{{\mathcal B}}
\def\A{{\mathcal A}}
\def\clambda{\check{\lambda}}
\def\cgamma{\check{\gamma}}
\def\Ss{{\mathbb{S}}}
\newcommand{\pa}[1]{\left({#1}\right)}
\newcommand{\cro}[1]{\left[{#1}\right]}
\newcommand{\abs}[1]{\left|{#1}\right|}
\newcommand{\ac}[1]{\left\{{#1}\right\}}
\newcommand{\norm}[1]{\left\|{#1}\right\|}
\newcommand{\beqe}{\begin{eqnarray*}}
\newcommand{\eeqe}{\end{eqnarray*}}
\newcommand{\beq}{\begin{eqnarray}}
\newcommand{\eeq}{\end{eqnarray}}
\newtheorem{thm}{Theorem}
\newtheorem{cor}{Corollary}
\newtheorem{prop}{Proposition}
\newtheorem{defi}{Definition}
\newcommand{\eref}[1]{(\ref{#1})}
\def\<{{\langle}}
\def\>{{\rangle}}
\def\M{{\mathcal M}}
\def\telque{{\ \big |\ }}
\def\sqw{\hbox{\rlap{\leavevmode\raise.3ex\hbox{$\sqcap$}}$%
\sqcup$}}
\begin{document}

\title{Estimation of the density of a determinantal process}

\author{Yannick Baraud }

\address{Universit\'e de Nice Sophia-Antipolis\\
Laboratoire J.A. Dieudonn\'e\\
UMR CNRS 7351\\
Parc Valrose\\
06108 Nice cedex 02}

\email{baraud@unice.fr}

\date{February 14th, 2013}

\keywords{Determinantal process - Density estimation- Oracle inequality - Hellinger distance} 
\subjclass[2000]{Primary 62G07; Secondary 62M30}

\maketitle

\begin{abstract}
We consider the problem of estimating the density $\Pi$ of a determinantal process $N$ from the observation of $n$ independent copies of it. We use an aggregation procedure based on robust testing to build our estimator. We establish non-asymptotic risk bounds with respect to the Hellinger loss and deduce, when $n$ goes to infinity, uniform rates of convergence over classes of densities $\Pi$ of interest. 
\end{abstract}

\section{Introduction}
The starting point of this work goes back to 2007 when Persi Diaconis visited our Laboratory Jean-Alexandre Dieudonn\'e in Nice. At that time, he explained that determinantal processes were emerging in many areas and that there was no statistical procedure to estimate their distributions. Almost five years later, it still seems to be the case. The aim of this paper is therefore to contribute to the study of these processes. Our aim is not only to focus on statistical estimation but also to discuss some related problems. For example, how the class $\D$ of all determinantal densities can be parametrized? Is there an identifiable way of doing it? Another natural question, at least for a Statistician, is to understand how the elements of $\D$ can be approximated. Are there some specific parametric sets that should be used to approximate the densities lying in $\D$? If so, what can be said about the approximation properties of these sets? Finally, given $n$ independent copies of a determinantal process $N$, we propose an estimator of the density $\Pi$ of $N$. We establish non-asymptotic risk bounds for our estimator and deduce uniform rates of convergence over classes of $\Pi$ of interest. It turns out that our estimation strategy is robust with respect to the assumption that $N$ is a determinantal process. This means that the risk bounds we get are not only valid when $\Pi$ belongs to the class $\D$ but also when $\Pi$ is close enough to it (in the Hellinger distance). Our approach is based on $T$-estimation as introduced by Birg\'e~\citeyearpar{MR2219712}. More precisely, we start with a suitable family of models, which typically consists of compact sets of densities, and the role of which is to provide a good approximation of the elements of $\D$. Then, we discretize these models. This results in a family of points $(\Pi_{\m})_{\m\in\MM}$ of $\D$ and we finally use the data in order to select a suitable point among the $\Pi_{\m}$. The way we select this point, which provides our estimator of $\Pi$, is based on robust testing and aims at finding an element among the $\Pi_{\m}$ which is as close as possible to the target density $\Pi$. We establish non-asymptotic risk bounds for our estimator and show how they depend on the approximation properties of the models we started from. Under {\it a posteriori} assumptions on $\Pi$ and for a suitable choice of the models, we specify this bounds and derive rates of convergence. 

For an introduction to determinantal processes, we refer the interested reader to  Lyons~\citeyearpar{MR2031202}, Hough {\it et al}~\citeyearpar{MR2216966} and the books by Anderson {\it et al}~\citeyearpar{MR2760897} and Hough {\it et al}~\citeyearpar{MR2552864}
as well as the references therein. Part of the popularity of determinantal processes comes from the fact that they naturally arise in the study of the eigenvalues of large random matrices. 
Recently, Borodin~{\it et al}~\citeyearpar{MR2721041} showed that these processes are also involved in the process of ``caries'' when adding a column of numbers.

The paper is organized as follows. In Section~\ref{sect-B} we settle the probabilistic background as well as our main notations and conventions. We introduce determinantal processes in Section~\ref{sect-C} and tackle the problem of estimating of their densities in Section~\ref{sect-estimation}. Finally, Section~\ref{sect-P} is devoted to the proofs.

\section{The background}\label{sect-B}
\subsection{Notations and conventions}
Throughout this paper we use the conventions $\sum_{\varnothing}=0$ and $\prod_{\varnothing}=1$ and set $\N^{*}=\N\setminus\{0\}$ and $\R_{+}^{*}=(0,+\infty)$.
Given a finite set $A$, $|A|$ denotes the cardinality of $A$ and for $z\in\C$, $\Re(z)$, $\overline z$ and $|z|$ denote the real part, conjugate and modulus of $z$ respectively.  We denote by $\PP$ the class of all finite subset $J$ of $\N^{*}$ and set $\PP^{*}=\PP\setminus\{\varnothing\}$. Moreover, we set  
\[
\Lambda=\{\lambda\in [0,1]^{\N^{*}},\ \ \abs{\lambda}^{2}=\sum_{j\ge 1}\lambda_{j}^{2}<+\infty\}.
\]
All along, we consider a metric space $(\X,d)$ which we endow with its Borel $\sigma$-field $\B(\X)$ and a $\sigma$-finite measure $\mu$. Roughly speaking, a point process on $(\X,\B(\X))$ will correspond to a random choice of a family of distinct points among $\X$. One should typically think of $\X$ as $\{1,\ldots,p\}$, $\N$, $\R$ or $\R^{p}$ for some positive integer $p$. When $\X$ is not finite, we denote by $\HH$ the Hilbert space of measurable and complex-valued functions $\phi$ on $(\X,\B(\X))$ satisfying
\[
\norm{\phi}^{2}=\int_{\X}\abs{\phi}^{2}d\mu<+\infty.
\]
We endow $\HH$ with the Hermitian inner product defined for $\phi,\psi\in\HH$ by
\[
\<\phi,\psi\>=\int_{\X}\overline \phi \psi d\mu.
\]
For conveniency, we adopt the convention that $\<.,.\>$ is linear with respect to the second argument and not the first one, as usually the case. In order to keep our notation as simple as possible, when $\X$ is finite,  say $\X=\{1,\ldots,p\}$, we embed $\X$ into $\N^{*}$ and use 
\[
\HH=\ell_{2}(\N^{*})=\{\phi\in\C^{\N^{*}},\ \sum_{i\ge 1}\abs{\phi(i)}^{2}<+\infty\}.
\] 
More precisely, a mapping $\phi$ on $\X=\{1,\ldots,p\}$ with values in $\C$ will be viewed as a sequence $(\phi(i))_{i\ge 1}\in\ell_{2}(\N^{*})$ with $\phi(i)=0$ for all $i>p$. Since $\HH$ is an infinite dimensional Hilbert space (whatever $\X$), we may define $\U$ as the set of all orthonormal sequences $\Phi=(\phi_{j})_{j\ge 1}$ in $\HH$. For $J\in \PP^{*}$ and $\Phi\in\U$, we set $\Phi_{J}=(\phi_{j})_{j\in J}$ and given an (ordered) finite subset $\alpha$ of $\X$,  denote by $\Phi_{\alpha,J}$ the $|\alpha|\times |J|$-matrix
\[
\Phi_{\alpha,J}=\pa{\phi_{j}(x)}_{x\in \alpha,j\in J}.
\] 
We extend this notation for rectangle matrices $A$ with entries in $\C$: $A_{\alpha,J}=(A_{i,j})_{i\in \alpha,j\in J}$. Moreover, $A^{*}$ denotes the transpose of the conjugate of $A$, that is, if $A=(A_{i,j})_{i=1,\ldots,k, j=1,\ldots,k'}$, $A^{*}=(\overline A_{j,i})_{j=1,\ldots,k',i=1,\ldots,k}$. 

Finally, we recall that the Hellinger distance $h$ between two densities $p,q$ on a measured space $(E,\EE,\nu)$ is defined by the formula
\[
h^{2}(p,q)={1\over 2}\int_{E}\pa{\sqrt{p}-\sqrt{q}}^{2}d\nu.
\]
For the sake of simplicity, we shall keep the same notation $h$ throughout this paper even though the measured space $(E,\EE,\mu)$ may vary. 

\subsection{The probabilistic background}
In this section, our aim is to introduce the probabilistic background we shall use throughout this paper. We denote by $\XX$ the class of all finite subsets of $\X$ and for $k\in\N$, denote by $\XX_{k}$ the class of those subsets with cardinality $k$. By convention, $\XX_{0}=\{\varnothing\}$. We identify $\XX$ with the set of finite measures of the form
\[
\overline \alpha=\sum_{x\in \alpha}\delta_{x} \ {\rm with}\ \ \alpha\in \XX
\]
and denote the same way $\alpha$ and $\overline \alpha$ so that for all $B\in\B$, 
$\alpha(B)$ means $\abs{\alpha\cap B}$. We equip $\XX$ with the smallest $\sigma$-field $\B(\XX)$ for which the mappings
\begin{eqnarray*}
\XX&\to& \N\\
M_{B}:\alpha &\mapsto& \alpha(B)
\end{eqnarray*}
are measurable for all $B\in\B(\X)$. In particular, the subsets $\XX_{k}=M_{\X}^{-1}(k)$ are measurable for all $k\in \N$. We endow $(\XX,\B(\XX))$ with the measure $L$ defined for all measurable functions $f$ from $\XX$ into $\R_{+}$ by 
\[
\int_{\XX}f(\alpha)dL(\alpha)=f(\varnothing)+\sum_{k\ge 1}{1\over k!}\int_{\X^{\vee k}}f(\{x_{1},\ldots,x_{k}\})d\mu(x_{1})\ldots d\mu(x_{k})
\]
where $\X^{\vee k}$ is the set of all $k$-uplets $(x_{1},\ldots,x_{k})\in\X^{k}$ with distinct coordinates. If $\X$ is finite, say $\X=\{1,\ldots,p\}$, and if $\mu$ is the counting measure on $\X$ then $L$ is merely the counting measure on $\XX$. 

Throughout this paper, a point process $N$ on $(\X,\B(\X))$ is a random variable defined on a probability space $(\Omega,\A,\P)$ with values in $(\XX,\B(\XX),L)$.

\section{Introduction to determinantal processes}\label{sect-C}
In this section, our aim is to define a determinantal process on $(\X,\B(\X))$. To do  so, we adopt the point of view developed in Hough~{\it et al}~\citeyearpar{MR2216966}. In particular, we start with the simpler case of {\it determinantal projection  processes}. 

\subsection{Determinantal projection processes} 
\begin{defi}\label{def-dpp}
Given $J\in\PP^{*}$ and $\Phi\in \U$, a determinantal projection process $N$ of rank $|J|$ with parameter $\Phi_{J}=(\phi_{j})_{j\in J}$ is a point process with density (with respect to $L$) given by
\begin{equation}\label{def-dproj}
\Pi^{\Phi}_{J}(\alpha)=\abs{\det\cro{\Phi_{\alpha,J}}}^{2}\1_{\XX_{|J|}}(\alpha)\ \ {\rm for\ all\ } \alpha\in\XX.
\end{equation}
When $J=\varnothing$, by convention $\Pi^{\Phi}_{\varnothing}=\delta_{\varnothing}$.
\end{defi}

%
%
If the matrix $\Phi_{\alpha,J}=(\phi_{j}(x))_{x\in \alpha,j\in J}$ depends on an ordering on the set $\alpha$, $\abs{\det\cro{\Phi_{\alpha,.}}}$ does not and we shall therefore omit to specify one. It follows from the definition of $\Pi^{\Phi}_{J}$ that with probability 1, $|N(\X)|=|J|$. Hence, a  determinantal projection process $N$ of rank $|J|$ consists of $|J|$ distinct points of $\X$. The location of these points depends on the geometry of the $\phi_{j}$ for $j\in J$. If the $\phi_{j}$ are real-valued, a configuration $\alpha=\{x_{1},\ldots,x_{k}\}$ of points is all the more likely that the volume of the parallelepiped based on the $|J|$ vectors $((\phi_{j}(x_{1}),\ldots,\phi_{j}(x_{k}))_{j\in J}$ is large.

The fact that $\Pi^{\Phi}_{J}$ is a density on $\XX$ might not be clear at first sight. In fact, when $\X=\{1,\ldots,p\}$ this comes the 
the Cauchy-Binet formula: if $A,B$ are $k\times p$ and $p\times k$ matrices respectively with $p\ge k$, the Cauchy-Binet formula asserts that 
\begin{equation}\label{Cauchy-Binet}
\det\cro{AB}=\sum_{\alpha\in\XX_{k}}\det A_{\{1,\ldots,k\},\alpha}\det B_{\alpha,\{1,\ldots,k\}}.\end{equation}
Again, note that this formula is independent of the choice of an ordering on $\alpha$. By using the Cauchy-Binet formula with $B=(\phi_{j}(x))_{x\in\X,j\in J}=\Phi_{\X,J}$, $A=B^{*}$ and by using the fact the family $(\phi_{j})_{j\in J}$ is orthonormal we get
\begin{eqnarray*}
\int_{\XX_{k}}\Pi^{\Phi}_{J}(\alpha)dL(\alpha)&=& \sum_{\alpha\in\XX_{k}}\overline{\det\cro{\Phi_{\alpha,J}}}\det\cro{\Phi_{\alpha,J}}=\sum_{\alpha\in\XX_{k}}\det\cro{\Phi_{J,\alpha}^{*}}\det\cro{\Phi_{\alpha,J}}\\
&=& \det\cro{\Phi_{J,\X}^{*}\Phi_{\X,J}}=\det\cro{\<\phi_{i},\phi_{j}\>_{i,j\in J}}=1.
\end{eqnarray*}
When $\X$ is no longer finite, the Cauchy-Binet formula can be extended by using the identity below from which we can deduce in a similar way as above that $\Pi_{J}^{\Phi}$ is a density.
\begin{prop}\label{CBG}
Let $\Phi=(\phi_{1},\ldots,\phi_{k})$ and $\Psi=(\psi_{1},\ldots,\psi_{k})$ be two elements of $\HH^{k}$. We have that 
\[
\det\cro{\pa{\<\phi_{i},\psi_{j}\>}_{i,j=1,\ldots,k}}=\int_{\XX_{k}}\det\cro{\Phi_{\{1,\ldots,k\},\alpha}^{*}}\det\cro{\Psi_{\alpha,\{1,\ldots,k\}}}dL(\alpha).
\]
\end{prop}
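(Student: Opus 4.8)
The plan is to recognize this as the Hilbert-space form of the Cauchy--Binet formula (Andr\'eief's identity) and to prove it by expanding both determinants with the Leibniz formula and then factorizing the resulting integral. First I would unfold the right-hand side using the definition of $L$ restricted to $\XX_{k}$,
\[
\int_{\XX_{k}}f(\alpha)dL(\alpha)={1\over k!}\int_{\X^{\vee k}}f(\{x_{1},\ldots,x_{k}\})d\mu(x_{1})\ldots d\mu(x_{k}),
\]
applied to $f(\alpha)=\det[\Phi_{\{1,\ldots,k\},\alpha}^{*}]\det[\Psi_{\alpha,\{1,\ldots,k\}}]$. This $f$ is well defined on $\XX_{k}$ because reordering $\alpha$ multiplies each of the two determinants by the same sign, leaving the product unchanged.

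Next I would replace the domain $\X^{\vee k}$ by the full product $\X^{k}$: on the set where $x_{i}=x_{i'}$ for some $i\neq i'$ the matrices $(\overline{\phi_{i}(x_{l})})$ and $(\psi_{j}(x_{l}))$ each have two identical columns, so both determinants vanish and $f$ is zero there. Writing the Leibniz expansions
\[
\det[\Phi_{\{1,\ldots,k\},\{x_{1},\ldots,x_{k}\}}^{*}]=\sum_{\sigma\in S_{k}}\eps(\sigma)\prod_{l=1}^{k}\overline{\phi_{\sigma(l)}(x_{l})},\qquad \det[\Psi_{\{x_{1},\ldots,x_{k}\},\{1,\ldots,k\}}]=\sum_{\tau\in S_{k}}\eps(\tau)\prod_{l=1}^{k}\psi_{\tau(l)}(x_{l}),
\]
with $\eps$ the signature, and inserting them into ${1\over k!}\int_{\X^{k}}(\cdots)$, each summand becomes, after Fubini, a product $\prod_{l=1}^{k}\int_{\X}\overline{\phi_{\sigma(l)}}\psi_{\tau(l)}d\mu=\prod_{l=1}^{k}\<\phi_{\sigma(l)},\psi_{\tau(l)}\>$, since the $l$-th factor depends only on $x_{l}$.

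To conclude I would reindex the double sum: setting $\pi=\tau\sigma^{-1}$ and $m=\sigma(l)$ gives $\eps(\sigma)\eps(\tau)=\eps(\pi)$ and $\prod_{l}\<\phi_{\sigma(l)},\psi_{\tau(l)}\>=\prod_{m}\<\phi_{m},\psi_{\pi(m)}\>$, so for each fixed $\sigma$ the sum over $\tau$ reproduces $\det[(\<\phi_{i},\psi_{j}\>)_{i,j}]$, independently of $\sigma$; summing the $k!$ identical contributions over $\sigma$ cancels the prefactor ${1\over k!}$ and yields the claim.

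The only genuine issue is the justification of Fubini, which requires absolute integrability over $\X^{k}$ of the product of the two determinants. This follows from $\Phi,\Psi\in\HH^{k}$, i.e. each $\phi_{i},\psi_{j}$ lies in $L^{2}(\mu)$: expanding the determinants produces finitely many terms, each a product of factors $\overline{\phi_{i}(x_{l})}\psi_{j}(x_{l})$ whose $\X^{k}$-integral is controlled by Cauchy--Schwarz through $\prod_{l}\norm{\phi_{i}}\,\norm{\psi_{j}}<+\infty$. This legitimizes both the term-by-term integration and the factorization above; everything else is bookkeeping on permutations.
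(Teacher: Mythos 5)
The paper gives no proof of Proposition~\ref{CBG}: it simply attributes the identity to de~Bruijn and to Baik and Rains, so there is nothing internal to compare your argument against. Your proof is the standard Andr\'eief/Cauchy--Binet computation and it is correct and complete: unfolding $L$ on $\XX_{k}$, checking that the integrand is well defined on unordered sets (both determinants pick up the same sign under reordering), extending the integral from $\X^{\vee k}$ to $\X^{k}$ because the integrand vanishes when two coordinates coincide, expanding both determinants by Leibniz, factorizing each term into a product of inner products $\<\phi_{\sigma(l)},\psi_{\tau(l)}\>$, and reindexing by $\pi=\tau\sigma^{-1}$ so that the $k!$ identical contributions cancel the $1/k!$. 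Your justification of Fubini via Cauchy--Schwarz on each of the finitely many Leibniz terms is exactly what is needed and also shows the right-hand side is absolutely convergent. The only blemish is cosmetic: with the paper's indexing, coincident points produce two identical \emph{columns} of $\Phi^{*}_{\{1,\ldots,k\},\alpha}$ but two identical \emph{rows} of $\Psi_{\alpha,\{1,\ldots,k\}}$; either way both determinants vanish, so nothing is affected.
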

This identity is known for a long time, especially when $\X$ is a compact interval of $\R$ and $\mu$ the Lebesgue measure on $\X$ (see de Bruijn~\citeyearpar{MR0079647}). The general form of this identity can be found in Baik and Rains~\citeyearpar{MR1844203}.

\subsection{The general case} 
As proved in Hough~{\it et al}~\citeyearpar{MR2216966},  the distribution of a (finite) determinantal process $N$ can be viewed as a mixture of densities of some determinantal projection processes. More precisely, a determinantal process can be defined as follows.
\begin{defi}
Let $\Phi\in \U$ and $\lambda\in \Lambda$. A determinantal process $N$ with parameters $(\Phi,\lambda)$ is a point process with density 
\begin{equation}\label{eq:def-det}
\Pi^{\Phi,\lambda}=\sum_{J\in\PP}p_{J}^{\lambda}\Pi^{\Phi}_{J} \ \ {\rm where}\ \ p^{\lambda}_{J}=\prod_{j\in J}\lambda_{j}^{2}\prod_{j\not\in J}(1-\lambda_{j}^{2})\ {\rm for\ all}\ J\in\PP.
\end{equation}
We use the convention $\Pi^{\Phi}_{\varnothing}=\delta_{\varnothing}$.
\end{defi}

Since $\lambda_{j}\in [0,1]$ for all $j\ge 1$ and
\begin{equation}\label{cond-sum}
\sum_{j\ge 1}\lambda_{j}^{2}<+\infty,
\end{equation}
the numbers $p_{J}^{\lambda}$ are nonnegative and well defined (the infinite product $\prod_{j\not\in J}(1-\lambda_{j}^{2})$ converges for all $J\in \PP$). Besides,  
\[
\sum_{J\in\PP}p_{J}^{\lambda}=\prod_{j\ge 1}(\lambda_{j}^{2}+(1-\lambda_{j}^{2}))=1.
\]
Consequently, $\Pi^{\Phi,\lambda}$ is indeed an (at most countable) mixture of densities. Given $J\in\PP^{*}$, it is not difficult to see that for the particular choice $\lambda=\lambda_{J}=(\1_{j\in J})_{j\ge 1}$, the density $\Pi^{\Phi,\lambda}$ is that of a determinantal projection process with parameter $\Phi_{J}$: indeed, for $J'=J$, $p_{J'}^{\lambda}=1$ and for $J'\neq J$, $p_{J'}^{\lambda}=0$.
 
As explained in Hough~{\it et al}~\citeyearpar{MR2216966}, another way of defining a determinantal process is as follows.
First simulate a sequence $(Z_{j})_{j\ge 1}$ of independent Bernoulli random variables with respective parameters $(\lambda_{j}^{2})_{j\ge 1}$. Consider the subset  $\widehat J$ of those indices $j\ge 1$ such that $Z_{j}=1$. Finally choose $N$ according to a determinantal projection process of rank $|\widehat J|$ with parameter $\Phi_{\widehat J}$. With such a description, Condition~\eref{cond-sum} is easy to understand: together with the Borel-Cantelli lemma, it ensures that $\widehat J$ is finite almost surely. It is also clear that the distribution of a determinantal process remains unchanged if we change the labelling of the pairs $((\lambda_{j},\phi_{j}))_{j\ge1}$. That is,  for all bijection $\sigma$ on $\N^{*}$, the parameters $((\phi_{j})_{j\ge 1},(\lambda_{j})_{j\ge 1})$ and $((\phi_{\sigma(j)})_{j\ge 1},(\lambda_{\sigma(j)})_{j\ge 1})$ lead to the same determinantal distribution. In particular, with no loss of generality, we may assume that the sequence $\lambda=(\lambda_{j})_{j\ge 1}$ is non-increasing with respect to $j$. 

In the literature, one usually associates to a determinantal process $N$ a square integrable kernel $K$ on $\X^{2}$ which defines a self-adjoint compact operator on $\HH$ by the formula
\begin{eqnarray}
\HH&\to& \HH\nonumber\\
T_{K}:\phi&\to& \cro{x\mapsto \int_{\X}K(x,y)\phi(y)d\mu(y)}.\label{def-T}
\end{eqnarray}
The sequences $(\lambda_{j}^{2})_{j\ge 1}$ and $\Phi=(\phi_{j})_{j\ge 1}$ mentioned above correspond then to the eigenvalues and associated eigenvectors of $T_{K}$. Conversely, given the sequences $\lambda=(\lambda_{j})_{j\ge 1}$ and $\Phi=(\phi_{j})_{j\ge 1}$ and provided that $\mu(\X)<+\infty$, the kernel $K$ can be obtained by the fomula (Mercer's Theorem)
\begin{equation}\label{def-K}
K(x,y)=\sum_{j\ge 1}\lambda_{j}^{2}\phi_{j}(x)\overline \phi_{j}(y)
\end{equation}
where the series converge absolutely for almost every $(x,y)\in\X^{2}$. When $\X=\{1,\ldots,p\}$, $K$ is merely (any) $p\times p$ Hermitian matrix with eigenvalues in $[0,1]$. Interestingly, the kernel $K$ can be related to the distribution of $N$ by the following formula which holds for all measurable functions $f$ from $\XX$ into $\R_{+}$
\[
\E\cro{\sum_{\alpha\subset N}f(\alpha)}=\int_{\XX}f(\alpha)\det[K_{\alpha,\alpha}]dL(\alpha)\ \ {\rm where}\ \ \ K_{\alpha,\alpha}=\pa{K(x,y)}_{x\in\alpha,y\in \alpha}.
\]
The mapping $\alpha\mapsto \det[K_{\alpha,\alpha}]$ determines the distribution of $N$ and is called the {\it  correlation function}. When $\X=\{1,\ldots,p\}$, this formula simply says  that for all $\alpha\subset \X$
\[
\P\cro{\alpha\subset N}=\det[K_{\alpha,\alpha}].
\]

\subsection{Hellinger distance and determinantal process}
In the previous section, we have seen  that the distribution of a determinantal process can be parametrized by a pair $(\Phi,\lambda)$ in $ \U\times \Lambda$ and that, conversely, any choice of such a pair allows to define a determinantal process.
The aim of this section is to relate the Hellinger distance between the distributions of two determinantal processes associated to two distinct pairs $(\Phi,\lambda)$ and $(\Psi,\gamma)$ to some distance between these pairs. Again, we start with the simpler case of a determinantal projection process.

\subsubsection{Case of a determinantal projection process}
Let $\Phi=(\phi_{j})_{j\ge 1}$ and $\Psi=(\psi_{j})_{j\ge 1}$ be two elements of $\U$ and  $J,J'$ two elements of $\PP$. If $|J|\neq |J'|$, the supports of $\Pi_{J}^{\Phi}$ and $\Pi_{J'}^{\Psi}$ are disjoint (the densities are supported by $\XX_{|J|}$ and $\XX_{|J'|}$ respectively) and hence $h^{2}(\Pi_{J}^{\Phi},\Pi_{J'}^{\Psi})=1$. If $J=J'=\varnothing$, $\Pi_{J}^{\Phi}=\Pi_{J'}^{\Psi}$ and therefore $h^{2}(\Pi_{J}^{\Phi},\Pi_{J'}^{\Psi})=0$. Consequently, the only case we need to consider is the one where $|J|=|J'|\ge 1$. In fact, as already mentioned, we may re-index one of the two sequences, say $\Psi$, in order to have $J=J'$ without changing the distribution $\Pi^{\Psi}$. By doing so, the following result holds.
\begin{prop}\label{propD}
Let $J\in\PP^{*}$ and $\Phi,\Psi\in \U$. We have,
\begin{eqnarray}
h^{2}(\Pi^{\Phi}_{J},\Pi^{\Psi}_{J})&=&1-\int_{\XX_{k}}\abs{\det\cro{\Phi_{\alpha,J}}}\abs{\det\cro{\Psi_{\alpha,J}}}dL(\alpha)\label{def-iso}\\
&\le& 1-\abs{\det\cro{\pa{\<\phi_{i},\psi_{j}\>}_{i,j\in J}}}\nonumber.
\end{eqnarray}
Moreover, 
\[
h^{2}(\Pi^{\Phi}_{J},\Pi^{\Psi}_{J})\le {5\over 2}\sum_{j\in J}\norm{\phi_{j}-\psi_{j}}^{2}.
\]
\end{prop}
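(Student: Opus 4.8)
The statement collects three facts; the first two are short and the substance is in the third, so I would organise the argument accordingly.

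\emph{The identity \eref{def-iso}.} Since $\Pi^{\Phi}_{J}$ and $\Pi^{\Psi}_{J}$ are densities with respect to $L$, the Hellinger distance is most conveniently written through its affinity form $h^{2}(\Pi^{\Phi}_{J},\Pi^{\Psi}_{J})=1-\int_{\XX}\sqrt{\Pi^{\Phi}_{J}\Pi^{\Psi}_{J}}\,dL$. By \eref{def-dproj} both densities are carried by $\XX_{k}$ with $k=|J|$, and there $\sqrt{\Pi^{\Phi}_{J}(\alpha)\Pi^{\Psi}_{J}(\alpha)}=\abs{\det\cro{\Phi_{\alpha,J}}}\,\abs{\det\cro{\Psi_{\alpha,J}}}$. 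Restricting the integral to $\XX_{k}$ gives \eref{def-iso} at once.

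\emph{The bound by $1-\abs{\det\cro{\pa{\<\phi_{i},\psi_{j}\>}_{i,j\in J}}}$.} After relabelling $J$ as $\{1,\dots,k\}$, which alters neither member, Proposition~\ref{CBG} applied to $(\phi_{j})_{j\in J}$ and $(\psi_{j})_{j\in J}$ reads $\det\cro{\pa{\<\phi_{i},\psi_{j}\>}_{i,j\in J}}=\int_{\XX_{k}}\det\cro{\Phi^{*}_{J,\alpha}}\det\cro{\Psi_{\alpha,J}}\,dL(\alpha)$. On $\XX_{k}$ the matrices are square of size $k$, so $\det\cro{\Phi^{*}_{J,\alpha}}=\overline{\det\cro{\Phi_{\alpha,J}}}$; taking moduli and moving the modulus inside the integral bounds $\abs{\det\cro{\pa{\<\phi_{i},\psi_{j}\>}_{i,j\in J}}}$ above by the affinity $\int_{\XX_{k}}\abs{\det\cro{\Phi_{\alpha,J}}}\,\abs{\det\cro{\Psi_{\alpha,J}}}\,dL(\alpha)$ appearing in \eref{def-iso}. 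Subtracting from $1$ gives the inequality.

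\emph{The bound ${5\over 2}\sum_{j\in J}\norm{\phi_{j}-\psi_{j}}^{2}$.} Put $G=\pa{\<\phi_{i},\psi_{j}\>}_{i,j\in J}$, a $k\times k$ matrix; by the previous step it suffices to bound $1-\abs{\det G}$. First, $G$ is a contraction: for $v=(v_{j})_{j\in J}$ and $w=\sum_{j\in J}v_{j}\psi_{j}$ one has $\<v,G^{*}Gv\>=\sum_{i\in J}\abs{\<\phi_{i},w\>}^{2}=\norm{P_{V}w}^{2}\le\norm{w}^{2}=\abs{v}^{2}$, where $P_{V}$ is the orthogonal projection onto the span of $(\phi_{j})_{j\in J}$ and the last equality uses the orthonormality of $(\psi_{j})$. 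Hence the eigenvalues $\mu_{1},\dots,\mu_{k}$ of $G^{*}G$ lie in $[0,1]$ and $\abs{\det G}\le1$, so that $1-\abs{\det G}\le1-\abs{\det G}^{2}=1-\prod_{l}\mu_{l}\le\sum_{l}(1-\mu_{l})=k-\tr(G^{*}G)$, the last inequality being the Weierstrass product bound for numbers in $[0,1]$. Finally $\tr(G^{*}G)=\sum_{i,j\in J}\abs{\<\phi_{i},\psi_{j}\>}^{2}\ge\sum_{j\in J}(\Re\<\phi_{j},\psi_{j}\>)^{2}$, and since $\Re\<\phi_{j},\psi_{j}\>=1-{1\over 2}\norm{\phi_{j}-\psi_{j}}^{2}$ we get $(\Re\<\phi_{j},\psi_{j}\>)^{2}\ge1-\norm{\phi_{j}-\psi_{j}}^{2}$, whence $k-\tr(G^{*}G)\le\sum_{j\in J}\norm{\phi_{j}-\psi_{j}}^{2}$. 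This already yields the inequality with constant $1$, a fortiori with ${5\over 2}$.

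\emph{Where the difficulty lies.} The only genuinely nontrivial point is the last one: the quantity $\abs{\det G}$ is invariant under unitary changes of the two families $(\phi_{j})$ and $(\psi_{j})$ and therefore only records the relative position of the subspaces they span, whereas the target $\sum_{j\in J}\norm{\phi_{j}-\psi_{j}}^{2}$ is not invariant. Passing to $\det(G^{*}G)=\abs{\det G}^{2}$ and linearising the product of eigenvalues reduces the comparison to the trace $\tr(G^{*}G)$, which is directly expressible through the diagonal inner products $\<\phi_{j},\psi_{j}\>$ and hence through the distances $\norm{\phi_{j}-\psi_{j}}$; this is what keeps the estimate clean. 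A more hands-on route would expand $\det(I+E)$ with $E_{ij}=\<\phi_{i},\psi_{j}-\phi_{j}\>$ and control its principal minors by Hadamard's inequality, but a term-by-term use of Hadamard generates cross terms $\sum_{i<j}\norm{\phi_{i}-\psi_{i}}\,\norm{\phi_{j}-\psi_{j}}$ that are \emph{not} dominated by $\sum_{j}\norm{\phi_{j}-\psi_{j}}^{2}$, so one would have to regroup them through the singular values of $E$ --- exactly the detour the $\det(G^{*}G)$ argument avoids.
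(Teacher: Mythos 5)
Your proof is correct, and while the first two parts coincide with the paper's argument (affinity computation for the identity, then Proposition~\ref{CBG} plus moving the modulus inside the integral), your treatment of the third inequality is genuinely different from — and cleaner than — the one in the paper. The paper writes $A=\pa{\<\phi_{i},\psi_{j}\>}_{i,j\in J}=D+B$ with $D$ the diagonal part, factors $\det A=\det D\,\det(I+M)$ with $M=D^{-1}B$, and controls $\det(I+M)=e^{\tr(L)}$ through the matrix logarithm; this forces a preliminary case split on $a=\sum_{j\in J}\norm{\phi_{j}-\psi_{j}}^{2}\le 2/5$ (to make $D$ invertible and $\norm{M}<1$) and a careful bookkeeping of constants that ends at $5/2$. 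Your route instead observes that $G=\pa{\<\phi_{i},\psi_{j}\>}_{i,j\in J}$ is a contraction (the computation $\norm{Gv}^{2}=\norm{P_{V}w}^{2}\le\norm{w}^{2}=\abs{v}^{2}$ is right, using orthonormality of both families), so the eigenvalues of $G^{*}G$ lie in $[0,1]$, and then $1-\abs{\det G}\le 1-\det(G^{*}G)\le k-\tr(G^{*}G)$ by the Weierstrass product inequality; the trace is bounded below by the diagonal terms and $\pa{1-\tfrac12\norm{\phi_{j}-\psi_{j}}^{2}}^{2}\ge 1-\norm{\phi_{j}-\psi_{j}}^{2}$ holds unconditionally, so no case split is needed. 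The payoff is a sharper constant: you obtain $h^{2}(\Pi^{\Phi}_{J},\Pi^{\Psi}_{J})\le\sum_{j\in J}\norm{\phi_{j}-\psi_{j}}^{2}$, which implies the stated bound with $5/2$ and is closer to the lower bound $\tfrac12\norm{\phi_{1}-\psi_{1}}^{2}$ exhibited in the paper's one-dimensional example. The only presentational caveat is that your contraction step silently uses the paper's convention that $\<\cdot,\cdot\>$ is linear in the second argument (so that $(Gv)_{i}=\<\phi_{i},w\>$); it is worth making that explicit, but nothing in the argument depends on it beyond notation.
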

Up to constants, the last inequality is sharp. For example, if $J=\{1\}$, $\Pi_{\{1\}}^{\Phi}$ and $\Pi_{\{1\}}^{\Psi}$ correspond to the two densities on $(\X,\B(\X))$ given by 
\begin{equation}\label{ex1}
\Pi_{\{1\}}^{\Phi}(x)=\abs{\phi_{1}(x)}^{2}\ \ {\rm and}\ \ \Pi_{\{1\}}^{\Psi}(x)=\abs{\psi_{1}(x)}^{2}\ \ {\rm for\ all}\ \ x\in\X
\end{equation}
and hence, if $\phi_{1},\psi_{1}$ are two nonnegative real-valued functions on $(\X,\B(\X))$,
\[
h^{2}\pa{\Pi_{\{1\}}^{\Phi},\Pi_{\{1\}}^{\Psi}}={1\over 2}\int_{\X}\pa{\sqrt{\Pi_{\{1\}}^{\Phi}}-\sqrt{\Pi_{\{1\}}^{\Psi}}}^{2}d\mu={1\over 2}\norm{\phi_{1}-\psi_{1}}^{2}.
\]
Clearly, this equality is no longer true when the nonnegativity assumption on $\phi_{1}$ and $\psi_{1}$ is violated. Nevetheless, Proposition~\ref{propD} says that the inequality
remains true (up to a constant). The proof of this proposition is postponed to Section~\ref{P-pD}.

\subsubsection{The general case}
Since $\Pi^{\Phi,\lambda}$ and $\Pi^{\Psi,\gamma}$ are mixtures, the problem of bounding the Hellinger distance between these two densities amounts to understanding how, more generally, the Hellinger distance behaves with respect to mixtures of densities. More precisely, let $p,q$ be two densities on the measured space $(T,\T,m)$ and $(P_{t})_{t\in T}$ and $(Q_{t})_{t\in T}$ two families of densities on a measured space $(E,\A,\nu)$. What can we say about the Hellinger distance between the two mixtures
\[
P=\int_{T} P_{t}p(t)dm(t)\ \ {\rm and}\ \ Q=\int_{T} Q_{t}q(t)dm(t)
\] 
when we known how far $p$ is from $q$ and the $P_{t}$ from the $Q_{t}$? The following result gives an answer.

\begin{prop}\label{melange}
If $m$ and $\nu$ are both $\sigma$-finite, 
\[
h^{2}(P,Q)\le 2h^{2}(p,q)+2\int_{T}h^{2}(P_{t},Q_{t})q(t)dm(t).
\]
\end{prop}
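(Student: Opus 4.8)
The plan is to bound the Hellinger distance by working with the square roots of the mixture densities and inserting a cleverly chosen intermediate quantity. Recall that $h^2(P,Q) = \frac12\int_E (\sqrt P - \sqrt Q)^2 d\nu = 1 - \int_E \sqrt{PQ}\,d\nu$, so it suffices to produce a good lower bound on the affinity $\rho(P,Q)=\int_E\sqrt{PQ}\,d\nu$. Since $P=\int_T P_t p(t)\,dm(t)$ and similarly for $Q$, the difficulty is that the square root does not pass through the integral. To handle this I would introduce the auxiliary mixture $R=\int_T Q_t\,p(t)\,dm(t)$, which uses the ``first'' mixing density $p$ together with the ``second'' family $Q_t$; this lets me separate the discrepancy between $p$ and $q$ from the discrepancy between $P_t$ and $Q_t$.

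First I would use the elementary triangle-type inequality for the Hellinger distance (or equivalently for the $\L^2$ norm of square roots), namely $h^2(P,Q)\le 2h^2(P,R)+2h^2(R,Q)$, which follows from $(\sqrt P-\sqrt Q)^2\le 2(\sqrt P-\sqrt R)^2+2(\sqrt R-\sqrt Q)^2$ after integrating. This reduces the problem to bounding the two pieces separately. For the second piece, $P$ and $R$ share the \emph{same} mixing weights $p$ but differ in the conditional families ($P_t$ versus $Q_t$), whereas for $h^2(R,Q)$ the families agree ($Q_t$ in both) but the weights differ ($p$ versus $q$).

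For the term $h^2(P,R)$, where both mixtures use the weight $p(t)$, I would exploit the convexity of the squared Hellinger distance in its arguments: writing $\sqrt P-\sqrt R$ and applying Jensen's inequality (or the convexity/joint-convexity of $(u,v)\mapsto(\sqrt u-\sqrt v)^2$) against the probability measure $p(t)\,dm(t)$, I expect to obtain $h^2(P,R)\le \int_T h^2(P_t,Q_t)\,p(t)\,dm(t)$. The more delicate point here is justifying the interchange, which is where the $\sigma$-finiteness of $m$ and $\nu$ and Tonelli's theorem for nonnegative integrands come in. For the term $h^2(R,Q)$, both mixtures use the common family $Q_t$ but weights $p$ and $q$; here I anticipate a bound of the form $h^2(R,Q)\le h^2(p,q)$, obtained by a similar convexity argument in which $Q_t$ plays the role of a stochastic kernel that can only contract the Hellinger distance, so that mixing cannot increase the distance between the weights. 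Combining $h^2(P,Q)\le 2h^2(P,R)+2h^2(R,Q)$ with these two bounds yields exactly the claimed inequality.

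The main obstacle I foresee is the careful verification of the two convexity/contraction bounds, and in particular matching the stated weighting: the final bound weights $h^2(P_t,Q_t)$ by $q(t)$ rather than $p(t)$, so if my intermediate mixture $R$ produces the weight $p(t)$ I would instead define $R=\int_T P_t\,q(t)\,dm(t)$ (swapping the roles so that the family-discrepancy term naturally carries the weight $q$) and re-run the same two-step argument. Getting this bookkeeping right, together with the measure-theoretic justification of the Fubini--Tonelli interchanges for the square-root integrands under $\sigma$-finiteness, is the part that requires genuine care; the inequalities themselves are soft consequences of convexity once the correct intermediate object is chosen.
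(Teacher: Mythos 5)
Your proposal is correct, and it uses the same key decomposition as the paper: the intermediate mixture $R=\int_T P_t\,q(t)\,dm(t)$ (your final, corrected choice) together with $h^2(P,Q)\le 2h^2(P,R)+2h^2(R,Q)$. Where you diverge is in how the two pieces are bounded. The paper handles both by an explicit Cauchy--Schwarz computation on $(P-R)^2/(\sqrt P+\sqrt R)^2$, splitting the integrand as $\sqrt{P_t}(\sqrt p-\sqrt q)\cdot\sqrt{P_t}(\sqrt p+\sqrt q)$ and using $\int_E P_t\,d\nu=1$. You instead invoke two standard structural facts: joint convexity of $(u,v)\mapsto(\sqrt u-\sqrt v)^2$ for the piece with common weights $q$ (giving $h^2(R,Q)\le\int_T h^2(P_t,Q_t)q(t)\,dm(t)$), and the data-processing/contraction property of the Hellinger affinity under the kernel $t\mapsto P_t$ for the piece with common family (giving $h^2(P,R)\le h^2(p,q)$, via $\sqrt{\smash[b]{\int P_tp}\,}\sqrt{\smash[b]{\int P_tq}\,}\ge\int P_t\sqrt{pq}$). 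Both routes are valid; yours is softer, avoids the pointwise ratio manipulation, and yields the stated constants $2$ directly with factor $1$ on each sub-bound, whereas the paper's computation produces a factor $2$ on each piece. Your caveat about the bookkeeping of which weight ($p$ or $q$) ends up on the $h^2(P_t,Q_t)$ term is exactly the right thing to watch, and your proposed fix (taking $R$ with family $P_t$ and weight $q$) resolves it correctly; the Fubini--Tonelli interchanges you flag are unproblematic since all integrands are nonnegative and both measures are $\sigma$-finite.
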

The proof of this result is postponed to Section~\ref{P-m}. 

We may apply Proposition~\ref{melange} with the choices $T=\PP$ ($m$ being the counting measure on $\PP$), $(E,\A,\nu)=(\XX,\B(\XX),L)$, $p=p^{\lambda}$ the density defined on $\PP$ by $p^{\lambda}(J)=p^{\lambda}_{J}$ for all $J\in\PP$, $q=p^{\gamma}$ defined analogously and for $t=J\in \PP$, $P_{t}=\Pi^{\Phi}_{J}$ and $Q_{t}=\Pi^{\Psi}_{J}$. We obtain the following result the proof of which is detailed in Section~\ref{P-m2}.

\begin{prop}\label{H-det}
Let $\Phi,\Psi\in \U$ and $\lambda,\gamma\in \Lambda$ and set 
\[
\check \lambda=\pa{1-\sqrt{1-\lambda_{j}^{2}}}_{j\ge 1}\ \ {\rm and}\ \ \check \gamma=\pa{1-\sqrt{1-\gamma_{j}^{2}}}_{j\ge 1}.
\]
The following inequalities hold
\begin{eqnarray}
h^{2}(p^{\lambda},p^{\gamma})&\le& \abs{\lambda-\gamma}^{2}+\abs{\check{\lambda}-\check{\gamma}}^{2}\label{eq1}\\
\sum_{J\in\PP}p_{J}^{\gamma}h^{2}(\Pi_{J}^{\Phi},\Pi_{J}^{\Psi})&\le& {5\over 2}\sum_{j\ge 1} \gamma_{j}^{2}\norm{\phi_{j}-\psi_{j}}^{2}.\label{eq2}
\end{eqnarray}
In particular, 
\begin{equation}\label{h-det}
h^{2}(\Pi^{\Phi,\lambda},\Pi^{\Psi,\gamma})\le 2\cro{\abs{\lambda-\gamma}^{2}+\abs{\check{\lambda}-\check{\gamma}}^{2}}+5\sum_{j\ge 1}\gamma_{j}^{2}\norm{\phi_{j}-\psi_{j}}^{2}.
\end{equation}  
\end{prop}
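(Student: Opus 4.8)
The plan is to obtain \eref{h-det} as a direct consequence of Proposition~\ref{melange}, so that the real content lies in establishing the two auxiliary bounds \eref{eq1} and \eref{eq2}. Indeed, taking $T=\PP$ with $m$ the counting measure, $(E,\A,\nu)=(\XX,\B(\XX),L)$, $p=p^{\lambda}$, $q=p^{\gamma}$, $P_{J}=\Pi^{\Phi}_{J}$ and $Q_{J}=\Pi^{\Psi}_{J}$, the two mixtures of Proposition~\ref{melange} are exactly $\Pi^{\Phi,\lambda}$ and $\Pi^{\Psi,\gamma}$ by \eref{eq:def-det}, and the proposition yields
\[
h^{2}(\Pi^{\Phi,\lambda},\Pi^{\Psi,\gamma})\le 2h^{2}(p^{\lambda},p^{\gamma})+2\sum_{J\in\PP}p_{J}^{\gamma}h^{2}(\Pi_{J}^{\Phi},\Pi_{J}^{\Psi}).
\]
Substituting \eref{eq1} into the first term and \eref{eq2} into the second produces \eref{h-det}, after noting that $2\times(5/2)=5$.

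For \eref{eq1} I would exploit the product structure of $p^{\lambda}$. By \eref{eq:def-det}, $p^{\lambda}$ is the law on $\PP$ of the random index set $\widehat J$ built from independent Bernoulli variables with parameters $(\lambda_{j}^{2})_{j\ge1}$; equivalently, $p^{\lambda}$ is the product over $j\ge1$ of the Bernoulli$(\lambda_{j}^{2})$ laws on $\{0,1\}$. Since $h^{2}(p,q)=1-\int\sqrt{pq}$ and the Hellinger affinity $\int\sqrt{pq}$ is multiplicative over products, we get $h^{2}(p^{\lambda},p^{\gamma})=1-\prod_{j\ge1}\rho_{j}$, where $\rho_{j}=\lambda_{j}\gamma_{j}+\sqrt{(1-\lambda_{j}^{2})(1-\gamma_{j}^{2})}$ is the affinity between the Bernoulli$(\lambda_{j}^{2})$ and Bernoulli$(\gamma_{j}^{2})$ marginals. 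Using the elementary bound $1-\prod_{j}\rho_{j}\le\sum_{j}(1-\rho_{j})$ (valid since each $\rho_{j}\in[0,1]$), it suffices to check the per-coordinate inequality $1-\rho_{j}\le(\lambda_{j}-\gamma_{j})^{2}+(\check\lambda_{j}-\check\gamma_{j})^{2}$. The genuinely clever step is the observation that, writing $\check\lambda_{j}-\check\gamma_{j}=\sqrt{1-\gamma_{j}^{2}}-\sqrt{1-\lambda_{j}^{2}}$ and expanding both squares, one gets the exact identity $(\lambda_{j}-\gamma_{j})^{2}+(\check\lambda_{j}-\check\gamma_{j})^{2}=2(1-\rho_{j})$; since $1-\rho_{j}\ge0$ by Cauchy--Schwarz, the per-coordinate inequality follows immediately, and summing over $j$ gives \eref{eq1}.

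For \eref{eq2} the strategy is to insert the pointwise bound of Proposition~\ref{propD}, namely $h^{2}(\Pi^{\Phi}_{J},\Pi^{\Psi}_{J})\le(5/2)\sum_{j\in J}\norm{\phi_{j}-\psi_{j}}^{2}$, and then interchange the order of summation, which gives
\[
\sum_{J\in\PP}p_{J}^{\gamma}h^{2}(\Pi_{J}^{\Phi},\Pi_{J}^{\Psi})\le\frac{5}{2}\sum_{j\ge1}\norm{\phi_{j}-\psi_{j}}^{2}\sum_{J\in\PP,\,j\in J}p_{J}^{\gamma}.
\]
It then only remains to evaluate the inner sum: by the probabilistic description of $p^{\gamma}$ and independence of the Bernoulli variables, $\sum_{J\ni j}p_{J}^{\gamma}=\P(j\in\widehat J)=\gamma_{j}^{2}$, which yields \eref{eq2} directly.

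I expect no serious obstacle: once the product representation of $p^{\lambda}$ and the identity $(\lambda_{j}-\gamma_{j})^{2}+(\check\lambda_{j}-\check\gamma_{j})^{2}=2(1-\rho_{j})$ are in hand, everything reduces to a short computation. The only points requiring a word of care are the convergence of the infinite product and sums (guaranteed by $\lambda,\gamma\in\Lambda$, which keeps the right-hand sides finite) and the fact that Proposition~\ref{melange} weights the conditional term by $q=p^{\gamma}$ rather than by $p^{\lambda}$; this is precisely why \eref{eq2} is stated with the weights $\gamma_{j}^{2}$, so that the two estimates dovetail cleanly into \eref{h-det}.
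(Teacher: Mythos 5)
Your proposal is correct and follows the same overall architecture as the paper: \eref{h-det} is obtained by feeding \eref{eq1} and \eref{eq2} into Proposition~\ref{melange} with exactly the choices you list, and your proof of \eref{eq2} (insert the bound of Proposition~\ref{propD}, interchange the sums, evaluate $\sum_{J\ni j}p_{J}^{\gamma}=\gamma_{j}^{2}$) is the paper's argument essentially verbatim. The only divergence is the final step of \eref{eq1}. Both you and the paper reduce to the product formula $h^{2}(p^{\lambda},p^{\gamma})=1-\prod_{j\ge 1}\rho_{j}$ with $\rho_{j}=\lambda_{j}\gamma_{j}+\sqrt{(1-\lambda_{j}^{2})(1-\gamma_{j}^{2})}$ and rest on the same key identity $2(1-\rho_{j})=\abs{\lambda_{j}-\gamma_{j}}^{2}+\abs{\check\lambda_{j}-\check\gamma_{j}}^{2}$ (you correctly note that the offset in the definition of $\check\lambda$ cancels in the difference). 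But where the paper writes $\prod_{j}\rho_{j}$ as an exponential of a sum of logarithms and invokes $\log(1-u)\ge cu$ on $[0,a/2]$ after a case analysis, arriving at the constant $e/[2(e-1)]\approx 0.79$, you use the elementary sub-additivity $1-\prod_{j}\rho_{j}\le\sum_{j}(1-\rho_{j})$, valid since each $\rho_{j}\in[0,1]$. Your route is shorter, avoids the case analysis entirely, and in fact yields the sharper constant $1/2$ in front of $\abs{\lambda-\gamma}^{2}+\abs{\check\lambda-\check\gamma}^{2}$, so nothing is lost and the downstream constants in \eref{h-det} would only improve.
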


\section{Statistical estimation}\label{sect-estimation}
Throughout this section, we consider a point process $N$ on $(\X,\B(\X))$ with density $\Pi$ with respect to $L$. Given $n$ independent copies $N_{1},\ldots,N_{n}$ of $N$, our aim is to estimate $\Pi$. One may naturally think of $N$ as being a determinantal process which means that $\Pi$ belongs to the set $\D$ of all determinantal distributions. Nevertheless, our result is robust with respect to such an assumption in the sense that $\Pi$ may not belong to $\D$. In this case, one may rather consider $\D$ as an approximation set for $\Pi$. Before turning to the estimation of $\Pi$, we shall first discuss some identifiability issues which are of independent interest and may therefore be skipped.

\subsection{Identifiability and  exterior algebra}\label{S-EA}
When $N$ is a determinantal process, we may write $\Pi=\Pi^{\Phi,\lambda}$ for some pair $(\Phi,\lambda)\in \U\times \Lambda$ or, alternatively, define $\Pi$ from some kernel $K$ on $\X^{2}$ as in \eref{def-K}. If these two approaches provide a parametrization of $\D$, none is identifiable. More precisely, two distinct  pairs in $\U\times \Lambda$ or two distinct kernels may parametrize the same determinantal distribution. This lack of identifiability is already true if one restricts to the simpler class of determinantal projection processes. A simple counter-example can be obtained from~\eref{ex1} with $(\X,\B(\X),\mu)=([0,1],\B([0,1],dx)$ by taking $\phi_{1}(x)=e^{ix}$ and $\psi_{1}(x)=e^{2ix}$. In this case, the corresponding kernels $K_{1}(x,y)=e^{i(x-y)}$ and $K_{2}(x,y)=e^{2i(x-y)}$ are distinct but both parametrize the uniform distribution on $[0,1]$. It is also clear from this counter-example that there is no hope to estimate $\phi_{1}$, which is not identifiable either. 

Consequently, a question arises. How can we define a one-to-one parametrization of $\D$? As we shall see, this problem is rather difficult. In fact,  we shall partially answer this question by restricting ourself to the case where $\X=\{1,\ldots,p\}$ and by focusing on the class $\D_{p,k}$ of all determinantal projection distributions of rank $k$ with $k\in\{1,\ldots,p-1\}$ and $p\ge 2$. It follows from Definition~\ref{def-dpp} that for each element $\Pi\in\D_{p,k}$, there exists an orthonormal family $\phi_{1},\ldots,\phi_{k}$ (which is certainly not unique) such that for all $\alpha\in\XX_{k}$
\begin{equation}\label{def-pi}
\Pi(\alpha)=\abs{\det\cro{\Phi_{\alpha,\{1,\ldots,k\}}}}^{2}.
\end{equation}
Let us now consider the exterior algebra $E=\bigwedge^{k}\C^{p}$ consisting of the sums of $k$-blades $\phi_{1}\wedge\ldots\wedge \phi_{k}$ with $\phi_{1},\ldots,\phi_{k}\in \C^{p}$. Since we shall only use the algebraic properties of these objects and more specifically their connections with determinants, we shall not define them and rather refer the interested reader to Mac Lane and Birkhoff ~\citeyearpar{MR941522}  (Chapter XVI, Section 7). Denoting by $e_{1},\ldots,e_{p}$ the canonical basis of $\C^{p}$, this exterior algebra $E$ can be viewed as a $\C$-linear space, a basis of which being given by the $k$-blades of the form 
\[
e_{\alpha}=e_{i_{1}}\wedge\ldots\wedge e_{i_{k}}
\]
where $\alpha=(i_{1},\ldots,i_{k})$ (with $1\le i_{1}<i_{2}\ldots<i_{k}\le p$) varies among $\XX_{k}$. This linear space can be equipped with an Hermitian inner product $[.,.]$ for which the elements $(e_{\alpha})_{\alpha\in\XX_{k}}$ provide an orthonormal family of $E$. Besides, for a $k$-blade $\phi_{1}\wedge\ldots\wedge \phi_{k}$
\begin{equation}\label{def-cro}
[e_{\alpha},\phi_{1}\wedge\ldots\wedge \phi_{k}]=\det\cro{\Phi_{\alpha,\{1,\ldots,k\}}}\ \ {\rm for\ all}\ \ \alpha\in\XX_{k}.
\end{equation}
Let $\S_{E}$ be the unit sphere of $(E,[.,.])$, $G$ the subset of $\S_{E}$ gathering 
the elements of the form $\phi_{1}\wedge\ldots\wedge \phi_{k}$ for $\phi_{1},\ldots,\phi_{k}$ being an orthonormal family of $\C^{p}$ and 
$G_{+}$ be the subset of $\S_{E}$ defined by 
\[
G_{+}=\{g_{+}=\sum_{\alpha\in\XX_{k}}\abs{[e_{\alpha},g]}e_{\alpha}\telque\ g\in G\}.
\]
It follows from~\eref{def-pi} and~\eref{def-cro} that the mapping 
\begin{eqnarray*}
G_{+} &\to& \D_{p,k}\\
g_{+}&\mapsto& \Pi_{g+}: \alpha\mapsto \abs{\cro{e_{\alpha},g}}^{2}=\abs{\cro{e_{\alpha},g_{+}}}^{2}
\end{eqnarray*}
is surjective. It is also clearly one-to-one and provides thus an identifiable parametrization of the elements of $\D_{p,k}$ by those of $G_{+}$. In fact, if $\Delta$ denotes the Hermitian distance on $E$ defined for $g,g'\in E$ by $\Delta^{2}(g,g')=[g-g',g-g']$, $(G_{+},\Delta)$ and $(\D_{p,k},\sqrt{2} h)$ are isometric: by~\eref{def-iso}, for all $g_{+},g'_{+}\in G_{+}$, 
\begin{eqnarray*}
\Delta^{2}(g_{+},g'_{+})&=&\sum_{\alpha\in\XX_{k}}\abs{\cro{g_{+},e_{\alpha}}-\cro{g'_{+},e_{\alpha}}}^{2}=2\cro{1-\sum_{\alpha\in\XX_{k}}\abs{\cro{g,e_{\alpha}}}\abs{\cro{g',e_{\alpha}}}}\\
&=& 2h^{2}(\Pi_{g+},\Pi_{g'+}).
\end{eqnarray*}
The metric dimension (in the sense given in Birg\'e~\citeyearpar{MR2219712}) of a set of densities is usually closely related to the minimax rate of estimation over this set. Roughly speaking, if the metric dimension of the set is $D$, one can expect that the minimax rate be of order $D/n$. The above isometry shows that the metric dimension $D_{p,k}$ of $(\D_{p,k},h)$ is the same as that of the subset $G_{+}\subset E$ for the Hermitian distance. In  particular $D_{p,k}$ is not larger than the dimension of $E$ (in the usual sense, viewed as a linear space on $\R$), that is $D_{p,k}\le 2\binom{p}{k}$. This upper bound is unfortunately very crude and we shall see that the minimax rates can be much faster. We believe that the metric dimension of $G_{+}$ is actually of order $kp$.

\subsection{The main result}
Let us now turn to the statistical part of this paper. As already mentioned, our aim is to estimate the density $\Pi$ of a point process $N$ from the observation of $n$ independent copies of it. Our estimation strategy is based on $T$-estimation. More precisely, we start with an at most countable family $\{\Pi_{\m},\ \m\in\MM\}$ of candidate determinantal densities, the choice of which will be explained below, and we use a test possessing robustness properties in view of selecting the closest element to $\Pi$ among the $\Pi_{\m}$. We shall not detail the statistical procedure here and rather refer the reader to 
Birg\'e~\citeyearpar{MR2219712} (Theorem~9) or Baraud~\citeyearpar{arXiv:0905.1486}. Nevertheless, in order to give an brief account of the estimation strategies described there, let us merely say that they allow to endow $\{\Pi_{\m},\ \m\in\MM\}$ with a (random) binary relation 
$\propto$ by means of a statistical test based on the observations. Given a pair $(\Pi_{\m},\Pi_{\m'})$ of distinct candidate densities, we either have $\Pi_{\m}\propto \Pi_{\m'}$ or $\Pi_{\m'}\propto \Pi_{\m}$, the test being build in such way that the former (respectively the latter) relation is likely to occur when $h(\Pi,\Pi_{\m})$ is small compared to $h(\Pi,\Pi_{\m'})$ and vice-versa. If the relation $\propto$ were a total order on $\{\Pi_{\m},\ \m\in\MM\}$, a natural idea would be to define the estimator of $\Pi$ as the minimal element of $(\{\Pi_{\m},\ \m\in\MM\},\propto)$. Unfortunately, this is not the case since $\propto$ fails to be transitive in general. A nice idea, which is actually due to Birg\'e, is to define the estimator as the element $\widehat \Pi$ of  $\{\Pi_{\m},\ \m\in\MM\}$ minimizing the quantity $\Pi_{\m}\mapsto {\rm crit}(\Pi_{\m})=\sup\{h(\Pi_{\m},\Pi_{\m'}), \Pi_{\m'}\propto \Pi_{\m}\}$. The property of the test ensures that the value of criterion at $\Pi_{\m}$ is likely to be large when $\Pi_{\m}$ is far from $\Pi$ (provided that there exist some elements $\Pi_{\m'}$ which are closer to $\Pi$) while it is likely to be small as soon as $\Pi_{\m}$ lies in a small enough neighborhood of $\Pi$. With such an estimation strategy, we can design an estimator possessing the following property.

\begin{prop}\label{aggreg}
Let $\mathbf{\Pi}=\{\Pi_{\m},\ \m\in\MM\}$ be an at most countable family of densities on $(\XX,\B(\XX),L)$ and $\pi$ a sub-probability on $\MM$, that is
\[
\sum_{\m\in\MM}\pi(\m)\le 1\ \ {\rm and}\ \ \pi(\m)\ge 0\ \ {\rm for\ all}\ \ m\in\MM.
\]
There exist a universal constant $C>0$ and an estimator $\widehat \Pi=\widehat \Pi(\mathbf{\Pi},\pi)$ solely based on $N_{1},\ldots,N_{n}$ such that whatever the density $\Pi$, 
\[
C \E\cro{h^{2}(\Pi,\widehat \Pi)}\le \inf_{\m\in\MM}\cro{h^{2}(\Pi,\Pi_{\m})+{\log(1/\pi(\m))\over n}}.
\]
\end{prop}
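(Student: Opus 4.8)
The plan is to realise the informal construction described just above the statement within Birg\'e's T-estimation framework, so that the proof splits into two parts: producing robust pairwise tests with the correct exponential behaviour, and converting the resulting criterion into a risk bound. Throughout, set $\Delta(\m)=\log(1/\pi(\m))$, so that the sub-probability hypothesis reads $\sum_{\m\in\MM}e^{-\Delta(\m)}\le 1$; this is the only way in which the weights $\pi(\m)$ will enter.

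The first and central step is the construction of the tests, and this is where the real work lies. For an ordered pair $(\m,\m')$ with $\Pi_\m\neq\Pi_{\m'}$ I would introduce a statistic $T_n(\m,\m')=\sum_{i=1}^{n}t_{\m,\m'}(N_i)$, where $t_{\m,\m'}$ is a bounded, antisymmetric function built from $\sqrt{\Pi_\m}$ and $\sqrt{\Pi_{\m'}}$ (for instance the normalised difference $(\sqrt{\Pi_{\m'}}-\sqrt{\Pi_\m})/(\sqrt{\Pi_{\m'}}+\sqrt{\Pi_\m})$, extended by a convention where both densities vanish). Since $\abs{t_{\m,\m'}}\le 1$, a Laplace-transform (Cram\'er--Chernoff) computation controls $\mathbb{E}[\exp(zT_n)]$ and, optimising in $z$, produces a deviation inequality whose exponent is governed by the Hellinger affinity between $\Pi_\m$ and $\Pi_{\m'}$. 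Declaring $\Pi_{\m'}\propto\Pi_\m$ when $T_n(\m,\m')$ crosses a threshold shifted by a term depending on $\Delta(\m)$ and $\Delta(\m')$, this yields the key estimate: there exist universal constants $a,b>0$ such that, for every density $\Pi$ and every pair $(\m,\m')$,
\begin{equation*}
\mathbb{P}\big[\Pi_{\m'}\propto\Pi_\m\big]\le \exp\Big(-a\,n\,h^2(\Pi_\m,\Pi_{\m'})+b\,n\,h^2(\Pi,\Pi_\m)+\Delta(\m)+\Delta(\m')\Big).
\end{equation*}
The hard part of the whole proof is concentrated here: one must calibrate the threshold and the constants so that the genuine separation $h^2(\Pi_\m,\Pi_{\m'})$ dominates both the bias term $h^2(\Pi,\Pi_\m)$ and the penalties, uniformly over all pairs and for an \emph{arbitrary} $\Pi$ (in particular one that need not be determinantal).

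With these tests in hand I would set $\mathrm{crit}(\Pi_\m)=\sup\{h(\Pi_\m,\Pi_{\m'}):\Pi_{\m'}\propto\Pi_\m\}$ and let $\widehat\Pi$ be a (measurable, approximate) minimiser of $\m\mapsto\mathrm{crit}(\Pi_\m)$. The point of this sup-criterion is precisely to absorb the failure of $\propto$ to be transitive: the argument never requires $\propto$ to be an order, only the one-sided bound above.

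Finally, to turn the criterion into a risk bound I would fix a reference index $\bar\m$ (to be optimised at the end inside the infimum) and work on the event where no index $\m$ with $h(\Pi_{\bar\m},\Pi_\m)$ large is linked to $\bar\m$ in the ``wrong'' direction. A triangle-inequality comparison then forces $h(\Pi,\widehat\Pi)$ to be of order $h(\Pi,\Pi_{\bar\m})+\mathrm{crit}(\Pi_{\bar\m})$, while summing the exponential estimate above over all offending $\m$ and invoking $\sum_{\m}e^{-\Delta(\m)}\le 1$ controls the complementary event. This produces a tail bound of the form $\mathbb{P}[h^2(\Pi,\widehat\Pi)\ge C_0(h^2(\Pi,\Pi_{\bar\m})+\Delta(\bar\m)/n)+t]\le e^{-cnt}$; integrating the tail in $t$ gives $\mathbb{E}[h^2(\Pi,\widehat\Pi)]\lesssim h^2(\Pi,\Pi_{\bar\m})+\Delta(\bar\m)/n$, and taking the infimum over $\bar\m\in\MM$ yields the announced oracle inequality with a universal constant $C$.
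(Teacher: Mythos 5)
Your proposal reconstructs from scratch the machinery that the paper deliberately does \emph{not} reprove: the paper's entire proof of Proposition~\ref{aggreg} is a citation, checking that the empirical measure $n^{-1}\sum_{i=1}^n\delta_{N_i}$ fits the framework of Baraud~(2011) and then applying Corollary~5 there with $s_\lambda=\Pi_{\m}$ and $\overline\Delta(\Pi_{\m})=\log(1/\pi(\m))$, the sub-probability hypothesis giving the required summation condition. So at the level of architecture you are describing exactly the proof that lives inside that citation (tests, sup-criterion, union bound over offending indices, integration of the tail), and that part of your outline is sound.

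The genuine gap is in your first step, which you correctly identify as carrying all the weight but then dispose of too quickly. The claim that for a bounded antisymmetric $t_{\m,\m'}$ built from $\sqrt{\Pi_{\m}}$ and $\sqrt{\Pi_{\m'}}$ a Chernoff bound yields
$\P\cro{\Pi_{\m'}\propto\Pi_{\m}}\le\exp\pa{-a\,n\,h^{2}(\Pi_{\m},\Pi_{\m'})+b\,n\,h^{2}(\Pi,\Pi_{\m})+\Delta(\m)+\Delta(\m')}$
uniformly over \emph{arbitrary} $\Pi$ is not something that follows from boundedness and antisymmetry: it requires bounding $\log\E_{\Pi}\cro{e^{z\,t_{\m,\m'}(N_{1})}}$, for a fixed $z$, by a quantity of the form $-a\,h^{2}(\Pi_{\m},\Pi_{\m'})+b\,h^{2}(\Pi,\Pi_{\m})$ with $a$ large enough relative to $b$, and most natural choices fail this test --- the log-likelihood ratio statistic is the standard counterexample showing that such robustness in Hellinger distance is not automatic. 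Establishing this moment inequality for a correctly chosen statistic (Birg\'e's tests between Hellinger balls, or Baraud's variational statistic involving $\sqrt{\Pi_{\m}/(\Pi_{\m}+\Pi_{\m'})}$ and $\sqrt{\Pi_{\m'}/(\Pi_{\m}+\Pi_{\m'})}$ together with a deterministic centering term) is the main technical content of Birg\'e~(2006, Theorem~9) and Baraud~(2011, Proposition~2--3); your specific candidate $(\sqrt{\Pi_{\m'}}-\sqrt{\Pi_{\m}})/(\sqrt{\Pi_{\m'}}+\sqrt{\Pi_{\m}})$ is not the one used there and would need its own verification. As written, the proposal asserts the one inequality that constitutes the theorem; either that inequality must be proved or, as the paper does, the whole statement should be obtained by invoking the published result.
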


\begin{proof}
Proposition~3 page 363 of Baraud~\citeyearpar{arXiv:0905.1486} (Example~1, Density Estimation) ensures that the random measure $n^{-1}\sum_{i=1}^{n}\delta_{N_{i}}$ with intensity $\Pi$ satisfies the assumption of Corollary~5 page 373 of Baraud~\citeyearpar{arXiv:0905.1486}. The result follows by applying this corollary with $s=\Pi$, $\Lambda=\MM$, $s_{\lambda}=\Pi_{\m}$ and $\overline \Delta(\Pi_{\m})=\log(1/\pi(\m))$ for all $\m\in\MM$, the summation condition~(4) of  Baraud~\citeyearpar{arXiv:0905.1486} being satisfied under the assumption that $\pi$ is a sub-probability.
\end{proof}

Before turning to the choice of the family $\{\Pi_{\m},\ \m\in\MM\}$, let us comment on the role of $\pi$ in our result. When $\pi$ is a probability, it can be interpreted as a prior on the family $\{\Pi_{\m},\ \m\in\MM\}$ and gives thus a bayesian flavor to our approach. Intuitively, our procedure tends to advantage densities $\Pi_{\m}$ associated to values of $\pi(\m)$ which are not too small.

We design our family $\{\Pi_{\m},\ \m\in\MM\}$ in view of possessing good approximation properties with respect to the elements of the class $\D$. Inequality~\eref{h-det} tells us that one can approximate a determinantal density $\Pi^{\Phi,\lambda}$ (with respect to the Hellinger distance) by suitably approximating the sequence $\lambda$ and the functions $\phi_{j}$ of $\Phi$ corresponding to those indices $j$ for which $\lambda_{j}$ is large enough. To do so, we introduce compacts subsets of $\Lambda$ and $\HH$ respectively defined as follows. Concerning $\lambda=(\lambda_{j})_{j\ge 1}$, with no loss of generality, we may assume the sequence is non-increasing with respect to $j$ and it is therefore natural to introduce compact sets of the form
\[
\Lambda_{j}=\{\gamma\in\Lambda, \gamma_{j'}=0\ {\rm for\ all}\ j'>j\}
\]  
for different values of $j\ge 1$. This amounts to approximating $\lambda$ by  the truncated sequence keeping the $j$ first entries of $\lambda$, the others being turned to 0. In order to approximate the $\phi_{j}$, we introduce an at most countable family $\H=(H_{m})_{m\in\M}$ of compact subsets of $(\HH,\norm{\ })$. Examples of such compacts sets will be given in Section~\ref{sect-rate} for the purpose of providing rates of convergence.  Given a compact subset $H$ of $(\HH,\norm{\ })$ and some positive number $\eta$, we denote by $H[\eta]$ a maximal $\eta$-separated subset of $H$, that is, any subset $H'\subset H$ of maximal cardinality satisfying the property: for all $\phi,\phi'\in H'$ with $\phi\neq \phi'$, $\norm{\phi-\phi'}>\eta$. The maximality of $H[\eta]$ implies that for all $\phi\in H$, there exists $\phi'\in H[\eta]$ such that $\norm{\phi-\phi'}\le \eta$. This means that $H[\eta]$ is an $\eta$-net for the compact set $H$. By applying Proposition~\ref{aggreg} to a suitable discretization of the compact sets $\Lambda_{j}$ and $H_{m}$, we deduce the result below. Its proof is detailed in Section~\ref{sect-T}.

\begin{thm}\label{main}
Let $\H=(H_{m})_{m\in\M}$ be an at most countable families of compact subsets $H_{m}$ of  $(\HH,\norm{\ })$ and let $\pi$ be a sub-probability on $\M$.
There exists a density estimator $\widehat \Pi$  such  that whatever the density $\Pi$ on $(\XX,\B(\XX),L)$,
\[
C\E\cro{h^{2}(\Pi,\widehat \Pi)}\le \inf_{\Phi\in\U,\lambda\in\Lambda}\cro{h^{2}(\Pi,\Pi^{\Phi,\lambda})+\inf_{j\ge 1}\pa{\sum_{j'=1}^{j}O(\H,\pi,\phi_{j'})+\sum_{j'>j}\lambda_{j'}^{2}}}
\]
where for all $j\ge 1$,
\[
O(\H,\pi,\phi_{j})=\inf_{m\in\M}\cro{\inf_{\psi\in H_{m}}\norm{\phi_{j}-\psi}^{2}+{1\over n}\log\pa{\abs{H_{m}[1/\sqrt{n}]}n\over \pi(m)}}
\]
and $C$ is a positive universal constant.
\end{thm}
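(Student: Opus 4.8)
### Proof plan

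The plan is to combine Proposition~\ref{aggreg} with the approximation bound \eref{h-det} by building a suitable discrete family $\mathbf{\Pi}=\{\Pi_{\m},\ \m\in\MM\}$ from the compact sets $H_m$ and the truncation levels $\Lambda_j$, and then choosing the sub-probability $\pi$ on $\MM$ so that the penalty term $\log(1/\pi(\m))/n$ in Proposition~\ref{aggreg} reproduces the quantities $O(\H,\pi,\phi_{j})$. The index set will be of the form $\m=(j,(m_{1},\ldots,m_{j}),(\psi_{1},\ldots,\psi_{j}),\ell)$, where $j\ge 1$ is the truncation rank, each $m_{j'}\in\M$ selects a compact $H_{m_{j'}}$, each $\psi_{j'}$ is a point of the $1/\sqrt n$-net $H_{m_{j'}}[1/\sqrt n]$ (after orthonormalizing, see below), and $\ell$ discretizes the truncated parameter $\lambda\in\Lambda_j$ on a grid of mesh $\sim 1/\sqrt n$ in the ball $\{|\gamma|\le 1\}$ of $\R^{j}$. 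To each such $\m$ we associate the determinantal density $\Pi_{\m}=\Pi^{\Psi,\gamma}$ where $\Psi$ is the orthonormal family obtained from $(\psi_{1},\ldots,\psi_{j})$ and $\gamma\in\Lambda_j$ is the grid point indexed by $\ell$.

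Next I would set up the sub-probability. Splitting the total mass geometrically over $j$ (say a factor $2^{-j}$), then over the $j$ choices of $m_{j'}$ using the sub-probability $\pi$ on $\M$, then uniformly over the $|H_{m_{j'}}[1/\sqrt n]|$ net points and over the roughly $n^{j/2}$ grid points for $\gamma$, one gets a $\log(1/\pi(\m))$ that, after the infimum in Proposition~\ref{aggreg}, is bounded by a constant times $\sum_{j'=1}^{j}\bigl[\log(|H_{m_{j'}}[1/\sqrt n]|\, n/\pi(m_{j'}))\bigr]$ plus lower-order terms. Dividing by $n$ and taking the infimum over $m_{j'}\in\M$ for each index separately yields exactly the sum $\sum_{j'=1}^{j}O(\H,\pi,\phi_{j'})$, up to the universal constant. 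The geometric $2^{-j}$ and the uniform $\gamma$-grid contribute only $O(j\log 2/n)$ and $O(j\log n /n)$, which are absorbed into the same sum (the $\log n$ factor already sits inside $O(\H,\pi,\phi_{j'})$).

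For the approximation side I would apply \eref{h-det} with $\Psi,\gamma$ the discretized parameters against an arbitrary target pair $(\Phi,\lambda)$. The three terms of \eref{h-det} are controlled as follows: the tail $\sum_{j'>j}\gamma_{j'}^{2}$ vanishes since $\gamma\in\Lambda_j$ and contributes the $\sum_{j'>j}\lambda_{j'}^2$ term after comparing $\gamma$ to $\lambda$; the grid discretization of $\lambda$ makes $|\lambda-\gamma|^2+|\check\lambda-\check\gamma|^2$ of order $j/n$, again absorbable; and the functional term $5\sum_{j'}\gamma_{j'}^2\norm{\phi_{j'}-\psi_{j'}}^2$ is bounded, since $\gamma_{j'}\le 1$, by $5\sum_{j'=1}^{j}\norm{\phi_{j'}-\psi_{j'}}^2$, and choosing $\psi_{j'}$ to be the net point closest to $\phi_{j'}$ in $H_{m_{j'}}$ gives $\inf_{\psi\in H_{m_{j'}}}\norm{\phi_{j'}-\psi}^2$ up to the net radius $1/n$. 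Finally, a triangle inequality $h^2(\Pi,\Pi_{\m})\le 2h^2(\Pi,\Pi^{\Phi,\lambda})+2h^2(\Pi^{\Phi,\lambda},\Pi_{\m})$ passes from $\Pi_{\m}$ to the genuine target $\Pi$; taking the infimum over $(\Phi,\lambda)$ and over $j$ produces the stated bound.

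The main obstacle I expect is the orthonormalization issue: the net points $\psi_{j'}\in H_{m_{j'}}[1/\sqrt n]$ need not form an orthonormal system, whereas $\Pi^{\Psi,\gamma}$ is only defined for $\Psi\in\U$. I would handle this by Gram--Schmidt, and the delicate point is to verify that orthonormalizing a family that is already $O(1/\sqrt n)$-close to the orthonormal target $(\phi_{j'})_{j'\le j}$ perturbs each vector by at most $O(1/\sqrt n)$ in $\norm{\ }$, so that the functional approximation term is unaffected up to the universal constant. This requires a quantitative stability estimate for Gram--Schmidt near an orthonormal frame, controlling the entries of the Gram matrix $(\<\psi_{i},\psi_{j}\>)$ (which differ from the identity by $O(1/\sqrt n)$) and hence the correction to each $\psi_{j'}$; everything else is bookkeeping of the penalty and routine application of \eref{h-det} and Proposition~\ref{aggreg}.
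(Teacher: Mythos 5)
Your architecture is the one the paper actually uses: discretize $(\Phi,\lambda)$ into a countable family of determinantal densities indexed by a truncation level $j$, model choices $m_{1},\ldots,m_{j}$, net points and a $\lambda$-grid; weight it by a product sub-probability; and combine Proposition~\ref{aggreg} with \eref{h-det} and a triangle inequality. Two of your steps, however, would fail as written. First, the $\lambda$-grid of mesh $1/\sqrt n$ is too coarse. Inequality \eref{h-det} involves not only $\abs{\lambda-\gamma}^{2}$ but also $\abs{\check{\lambda}-\check{\gamma}}^{2}$ with $\check{\lambda}_{\ell}=1-\sqrt{1-\lambda_{\ell}^{2}}$, and this map is only $1/2$-H\"older near $\lambda_{\ell}=1$: taking $\lambda_{\ell}=1$ and $\gamma_{\ell}=1-1/\sqrt n$ gives $\abs{\check{\lambda}_{\ell}-\check{\gamma}_{\ell}}^{2}\approx 2/\sqrt{n}$, so the discretization error over $j$ coordinates is of order $j/\sqrt n$, not $j/n$, and would swamp the terms $O(\H,\pi,\phi_{j'})$, which are only of order $\log n/n$ each. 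The paper uses $\abs{\sqrt{1-u^{2}}-\sqrt{1-v^{2}}}\le\sqrt{2\abs{u-v}}$ and accordingly takes mesh $1/n$ (hence $n^{j}$ grid points, still contributing only $j\log n$ to the penalty); you need the same refinement.

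Second, the orthonormalization step is a genuine gap and is harder than you suggest. The chosen net points $\psi_{j'}$ are not $O(1/\sqrt n)$-close to the $\phi_{j'}$: their distance includes the bias $\inf_{\psi\in H_{m_{j'}}}\norm{\phi_{j'}-\psi}$, which need not be small. Even when $\sum_{j'}\norm{\phi_{j'}-\psi_{j'}}^{2}$ is small, the off-diagonal Gram entries only satisfy $\abs{\<\psi_{i},\psi_{j}\>-\delta_{ij}}\lesssim\norm{\phi_{i}-\psi_{i}}+\norm{\phi_{j}-\psi_{j}}$, so the squared Hilbert--Schmidt distance of the Gram matrix to the identity is bounded by $4j\sum_{j'}\norm{\phi_{j'}-\psi_{j'}}^{2}$; a naive stability estimate for Gram--Schmidt would propagate this spurious factor $j$ into the bias term and ruin the bound. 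The paper sidesteps orthogonalization entirely: it defines $\overline \Phi$ as a near-minimizer over $\Psi\in\U$ of the $\ell^{2}$ distance between $\Psi_{\{1,\ldots,j\}}$ and the tuple of net points, and since the target $\Phi$ is itself an element of $\U$, the triangle inequality immediately yields $d(\Phi_{\{1,\ldots,j\}},\overline\Phi_{\{1,\ldots,j\}})\le 2\,d(\Phi_{\{1,\ldots,j\}},(\psi_{\ell})_{\ell\le j})+1/\sqrt n$, while the set of such $\overline\Phi$ still has cardinality at most $\prod_{\ell=1}^{j}\abs{H_{m_{\ell}}[1/\sqrt n]}$. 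Replacing your Gram--Schmidt step by this selection device and refining the $\lambda$-grid would make your argument coincide with the paper's proof.
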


Let us now comment on this risk bound.  The term $h^{2}(\Pi,\Pi^{\Phi,\lambda})$ corresponds to the approximation of $\Pi$ by an element of $\D$. It expresses the fact that our estimation procedure is robust with respect to the assumption that $\Pi$ belongs to $\D$. The quantity $\sum_{j'>j}\lambda_{j'}^{2}$ is the bias term that we get for approximating $\lambda$ by the elements of $\Lambda_{j}$. Given $j\ge 1$ and $m\in\M$, $\inf_{\psi\in H_{m}}\norm{\phi_{j}-\psi}^{2}$ corresponds to the best approximation of $\phi_{j}$ by some element of the compact set $H_{m}$. Enlarging $H_{m}$ (for the inclusion) makes this term smaller but may increase the quantity $n^{-1}\log\pa{\abs{H_{m}[1/\sqrt{n}]}n/\pi(m)}$ which measures in some sense the massiveness of $H_{m}$. The quantity $O(\H,\pi,\phi_{j})$ corresponds to the best trade-off that can achieved between these two terms among the family $\H$. It is typically the bound we would get for estimating the function $\phi_{j}$  alone by a model selection procedure among $\H$ (up to possible extra logarithmic factors). The sum $\sum_{j'=1}^{j}O(\H,\pi,\phi_{j'})$ is therefore the risk bound we get for estimating the $j$ first elements $\phi_{1},\ldots,\phi_{j}$ of $\Phi=(\phi_{j'})_{j'\ge 1}$. 

In order to specify these quantities, let us turn to the following typical situation. Let $(S_{m})_{m\in\M}$ be a family of finite-dimensional subspaces of $\HH$ with respective dimension $D_{m}\ge 1$ (viewed as a linear space on $\R$) and for $m\in\M$, let us take $H_{m}=\S\cap S_{m}$ where $\S$ denotes the unit sphere of $\HH$. The following results hold. 

\begin{prop}\label{prop-approx}
For all $n\ge 1$, 
\begin{equation}\label{eq-prop1}
\log \abs{H_{m}[1/\sqrt{n}]}\le D_{m}\log(2\sqrt{n}+1).
\end{equation}
Besides, for all $\phi\in\S$
\begin{equation}\label{eq-prop2}
\inf_{\psi\in H_{m}}\norm{\phi-\psi}\le 4 \inf_{\psi\in S_{m}}\norm{\phi-\psi}.
\end{equation}
\end{prop}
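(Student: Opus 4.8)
The plan is to treat the two inequalities separately. Inequality~\eref{eq-prop1} is a packing estimate obtained from a volume argument, while inequality~\eref{eq-prop2} is an approximation estimate obtained by projecting $\phi$ onto $S_m$ and renormalising.

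For~\eref{eq-prop1}, I would exploit that $(S_m,\norm{\ })$ is, as a real Euclidean space, isometric to $\R^{D_m}$, so that a Lebesgue-volume argument applies. By definition the points of $H_m[1/\sqrt n]$ are pairwise at distance strictly greater than $1/\sqrt n$ and all lie on the unit sphere $\S\cap S_m$. Hence the open balls of radius $1/(2\sqrt n)$ centred at these points are pairwise disjoint and contained in the ball of radius $1+1/(2\sqrt n)$ about the origin. Comparing volumes, which scale like the $D_m$-th power of the radius, gives $\abs{H_m[1/\sqrt n]}(1/(2\sqrt n))^{D_m}\le(1+1/(2\sqrt n))^{D_m}$, that is $\abs{H_m[1/\sqrt n]}\le(2\sqrt n+1)^{D_m}$; taking logarithms yields the claim.

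For~\eref{eq-prop2}, fix $\phi\in\S$, let $P_m$ denote the orthogonal projection of $\HH$ onto $S_m$, and set $r=\norm{\phi-P_m\phi}=\inf_{\psi\in S_m}\norm{\phi-\psi}$ and $a=\norm{P_m\phi}$. Pythagoras' identity gives $1=\norm{\phi}^2=a^2+r^2$, so $r\le1$ and $a=\sqrt{1-r^2}$. When $a>0$ the natural candidate in $H_m$ is the renormalised projection $\psi=P_m\phi/a$, which lies on $\S\cap S_m$. Decomposing $\phi-\psi=(\phi-P_m\phi)+(1-a^{-1})P_m\phi$ into its components in $S_m^\perp$ and $S_m$ and applying Pythagoras once more gives $\norm{\phi-\psi}^2=r^2+(a-1)^2$. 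Using $1-\sqrt{1-r^2}\le r^2$ one finds $(a-1)^2\le r^4\le r^2$, hence $\norm{\phi-\psi}\le\sqrt2\,r\le4r$. The only remaining case is $a=0$, i.e.\ $\phi\perp S_m$ and $r=1$: then any $\psi\in H_m$ (which is non-empty since $D_m\ge1$) satisfies $\norm{\phi-\psi}\le\norm\phi+\norm\psi=2\le4=4r$.

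Neither step presents a serious obstacle; the points requiring care are, for~\eref{eq-prop1}, the identification of the complex subspace $S_m$ with $\R^{D_m}$ so that the real dimension $D_m$ is the correct exponent in the volume scaling, and, for~\eref{eq-prop2}, the degenerate case $P_m\phi=0$ in which the renormalisation is undefined and must be handled separately by the crude triangle-inequality bound.
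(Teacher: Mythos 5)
Your proof is correct. For \eref{eq-prop1} you and the paper do essentially the same thing: the paper simply asserts that a $1/\sqrt{n}$-separated subset of the unit ball of a $D_m$-dimensional real linear space has at most $(2\sqrt{n}+1)^{D_m}$ points, and your volume comparison (disjoint balls of radius $1/(2\sqrt n)$ inside the ball of radius $1+1/(2\sqrt n)$) is precisely the standard proof of that assertion; your remark that $D_m$ must be the \emph{real} dimension is the right point to flag. For \eref{eq-prop2} your route is genuinely different and in fact sharper. You renormalise the exact metric projection $P_m\phi$ and use Pythagoras (valid for the real inner product $\Re\<\cdot,\cdot\>$ even when $S_m$ is only an $\R$-linear subspace) to get $\norm{\phi-\psi}^{2}=r^{2}+(1-\norm{P_m\phi})^{2}\le 2r^{2}$, i.e.\ the constant $\sqrt{2}$ instead of $4$, with the degenerate case $P_m\phi=0$ handled separately. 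The paper avoids projections entirely: it splits according to whether $\inf_{\psi\in S_m}\norm{\phi-\psi}\ge 1/2$ (where the trivial bound $\norm{\phi-\psi'}\le 2$ already gives the factor $4$) or $<1/2$ (where a near-minimiser $\psi$ satisfies $\norm{\psi}\ge 1/2$ and the triangle inequality gives $\norm{\phi-\psi/\norm{\psi}}\le 2\norm{\phi-\psi}/\norm{\psi}\le 4\norm{\phi-\psi}$). The paper's argument uses only the norm and would survive in a non-Hilbertian setting; yours exploits the Hilbert structure to improve the constant. Since only the existence of some universal constant matters downstream, both proofs serve equally well.
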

The first inequality gives a control of the maximal size of a  $1/\sqrt{n}$-separated subset of $H_{m}$. The second one shows that $H_{m}$ and $S_{m}$ share similar approximation properties with respect to the elements of $\S$. The proof of the proposition is delayed to Section~\ref{P-A}. With such a result, we deduce from Theorem~\ref{main} the following corollary.

\begin{cor}\label{maincor}
Let $\IS=(S_{m})_{m\in\M}$ be an at most countable family of finite dimensional subspaces of $(\HH,\norm{\ })$ with dimensions $D_{m}\ge 1$ and let $\pi$ be a sub-probability on $\M$. There exists a density estimator $\widehat \Pi=\widehat \Pi(\IS,\pi)$  such  that whatever the density $\Pi$ on $(\XX,\B(\XX),L)$,
\[
C\E\cro{h^{2}(\Pi,\widehat \Pi)}\le \inf_{\Phi\in\U,\lambda\in\Lambda}\cro{h^{2}(\Pi,\Pi^{\Phi,\lambda})+\inf_{j\ge 1}\pa{\sum_{j'=1}^{j}O(\IS,\pi,\phi_{j'})+\sum_{j'>j}\lambda_{j'}^{2}}}
\]
where for all $j\ge 1$,
\[
O(\IS,\pi,\phi_{j})=\inf_{m\in\M}\cro{\inf_{\psi\in S_{m}}\norm{\phi_{j}-\psi}^{2}+{D_{m}\log n+\log(1/\pi(m))\over n}}
\]
and $C$ is a positive universal constant.
\end{cor}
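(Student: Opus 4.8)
\textit{Proof of Corollary~\ref{maincor}.}
The plan is to obtain the corollary as a direct consequence of Theorem~\ref{main}, applied to the family $\H=(H_m)_{m\in\M}$ defined by $H_m=\S\cap S_m$, and then to replace the two terms of $O(\H,\pi,\phi_j)$ by the cleaner quantities appearing in $O(\IS,\pi,\phi_j)$ by means of Proposition~\ref{prop-approx}. The observation that makes Proposition~\ref{prop-approx} applicable is that, in the infimum over $\Phi=(\phi_j)_{j\ge1}\in\U$ occurring in Theorem~\ref{main}, each $\phi_j$ is a unit vector of $\HH$; hence $\phi_j\in\S$ and both \eref{eq-prop1} and \eref{eq-prop2} may be invoked with $\phi=\phi_j$.

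First I would treat the approximation term: squaring \eref{eq-prop2} gives, for every $m\in\M$ and $j\ge1$,
\[
\inf_{\psi\in H_m}\norm{\phi_j-\psi}^2\le 16\inf_{\psi\in S_m}\norm{\phi_j-\psi}^2.
\]
Then I would treat the entropy term: by \eref{eq-prop1}, $\log\abs{H_m[1/\sqrt n]}\le D_m\log(2\sqrt n+1)$, whence
\[
\frac1n\log\pa{\frac{\abs{H_m[1/\sqrt n]}\,n}{\pi(m)}}\le\frac{D_m\log(2\sqrt n+1)+\log n+\log(1/\pi(m))}{n}.
\]
Since $2\sqrt n+1\le n^2$ for $n\ge2$, we have $\log(2\sqrt n+1)\le 2\log n$, and, using $D_m\ge1$ so that the stray $\log n$ is absorbed by $D_m\log n$, the right-hand side is bounded by a numerical multiple of $\bigl(D_m\log n+\log(1/\pi(m))\bigr)/n$.

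Combining the two bounds and taking the infimum over $m\in\M$, I obtain a universal constant $C_1\ge1$ such that $O(\H,\pi,\phi_j)\le C_1\,O(\IS,\pi,\phi_j)$ for every $j\ge1$. Plugging this into the conclusion of Theorem~\ref{main}, and using $C_1\ge1$ to also majorize $h^2(\Pi,\Pi^{\Phi,\lambda})$ and $\sum_{j'>j}\lambda_{j'}^2$ by $C_1$ times themselves, I can factor $C_1$ out of the entire right-hand side, which yields the stated inequality with the new universal constant $C/C_1$. The only point requiring genuine care is the constant bookkeeping in the entropy term—verifying that the contributions $\log(2\sqrt n+1)$ and the additional $\log n$ are dominated by $D_m\log n$ through $D_m\ge1$ (valid once $n\ge2$, the only regime of interest)—but this is routine and leaves the structure of the argument untouched.
\cqfd
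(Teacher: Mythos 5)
Your proof is correct and follows essentially the same route the paper intends: apply Theorem~\ref{main} with the compact sets $H_m=\S\cap S_m$ (legitimate since each $\phi_j$ in the infimum over $\U$ is a unit vector) and use Proposition~\ref{prop-approx} to convert both the approximation and entropy terms, absorbing the extra $\log(2\sqrt n+1)$ and $\log n$ into $D_m\log n$ via $D_m\ge1$. The constant bookkeeping and the factoring of $C_1\ge1$ out of the right-hand side are handled correctly, so nothing further is needed.
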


For illustration, let us consider the elementary situation where $\X=\{1,\ldots,p\}$ and assume that $\Pi\in\D_{p,k}$ is a determinantal projection process on $\X$ of rank $k$ as in Section~\ref{S-EA}. In this case, one can choose $\IS=\{S_{0}\}$ where $S_{0}$ is the linear subspace of dimension $p$ of $\HH=\ell_{2}(\N^{*})$ gathering the elements of the form $(u_{1},\ldots,u_{p},0,\ldots)$ with $(u_{1},\ldots,u_{p})\in\C^{p}$ and  $\pi$ the Dirac mass at $0$. 
Whatever $\Pi\in\D_{p,k}$ there exists $\Phi\in\U$ with $\phi_{1},\ldots,\phi_{k}\in S_{0}\cap\S$ such that $\Pi=\Pi^{\Phi,\lambda}$ with $\lambda_{1}=\ldots=\lambda_{k}=1$ and $\lambda_{j}=0$ for $j>k$. Since $O(\{S_{0}\},\pi,\phi_{j'})\le p\log n/n$ for all $j'\in\{1,\ldots,k\}$ and $\sum_{j'>k}\lambda_{j'}^{2}=0$, by applying Corollary~\ref{maincor} we derive the risk bound
\[
C\E\cro{h^{2}(\Pi,\widehat \Pi)}\le {kp\log n\over n}\ \ \ \ {\rm for\ all}\ \ \  \Pi\in\D_{p,k}.
\]
This inequality shows that the minimax rate of estimation over $\D_{p,k}$ is not larger than $kp\log n/n$. 
Since we expect that the metric dimension of $\D_{p,k}$ is of order $kp$, we believe that the logarithmic factor could probably be dropped.

\subsection{Rates of convergence}\label{sect-rate}
In this section, we assume that $\X=[0,1]^k$ for some integer $k\ge 1$. Our aim is to deduce from Corollary~\ref{maincor} some rates of convergence towards $\Pi$ when it is of the form $\Pi^{\Phi,\lambda}$ for some parameter $(\Phi,\lambda)\in\U\times\Lambda$. To do so, we make some {\it a posteriori} smoothness assumptions on $\Phi=(\phi_{j})_{j\ge 1}$. More precisely, we assume that the $\phi_{j}$ are real-valued and belong to classes $\B_{p,p}^{\beta}([0,1]^k)$ of  (possibly) anisotropic real-valued Besov functions indexed by a number $p\in(0,+\infty]$ and a smoothness parameter $\beta=(\beta_{i})_{i=1,\ldots,k}\in
(0,+\infty)^{k}$. When $p=+\infty$, $\B_{\infty,\infty}^{\beta}([0,1]^k)$ is merely the class of anisotropic $\beta$-H\"olderian functions on $[0,1]^{k}$, which means that a function in  $\B_{\infty,\infty}^{\beta}([0,1]^k)$ is $\beta_{i}$-Holderian  on $[0,1]$ when we keep all the coordinates fixed expect the $i$-th. For a more precise definition of these smoothness classes we refer to Hochmuth~\citeyearpar{MR1884234}, at least when $k=2$.  The definition there can easily be generalized to larger values of $k$. Denoting by $|\phi|_{\beta,p,p}$ the Besov semi-norm of a function $\phi$ in $\B_{p,p}^{\beta}([0,1]^{k})$, we set for any $R>0$
\[
\U_{p,p}^{\beta}(R)=\ac{(\phi_{j})_{j\ge 1}\in\U\telque \phi_{j}\in \B_{p,p}^{\beta}([0,1]^{k}),\ |\phi_{j}|_{\beta,p,p}\le R,\ \forall j\ge 1}
\]
and for $j_{0}\in\N^{*}$,
\[
\U_{p,p}^{\beta}(R,j_{0})=\ac{(\phi_{j})_{j\ge 1}\in\U\telque \phi_{j}\in \B_{p,p}^{\beta}([0,1]^{k}),\ |\phi_{j}|_{\beta,p,p}\le R,\ \forall j=1,\ldots,j_{0}}.
\]
In order to approximate the elements of such class, we use the following result of Akakpo~\citeyearpar{Akakpo09}.

\begin{prop}\label{P-Approx2}
Let $p>0$, $k\in \N^{*}$ and $r\in\N$. There exist a collection of linear spaces 
$(S_{m})_{m\in\M_{k,r}}$ with $\M_{k,r}=\bigcup_{D\ge 1}\M_{k,r}(D)$ and a positive number $C_{k,r}$ such that for all 
positive integer $D$, 
\begin{equation}\label{nbm}
\abs{\M_{k,r}(D)}\le e^{C_{r,k}D}, \ \ \sup_{m\in\M_{k,r}(D)}\dim(S_{m})\le C_{r,k}D
\end{equation}
and 
\begin{equation}
\inf_{m\in \M_{k,r}(D)}\inf_{\psi\in S_{m}}\norm{\phi-\psi}\le
C(k,r,p)|\phi|_{\beta,p,p}D^{-\overline{\beta}/k}
\label{Eq-smooth}
\end{equation}
for all $\phi\in {\B}_{p,p}^{\beta}([0,1]^{k})$ and $\beta$ satisfying
\begin{equation}
\sup_{1\le i\le k}\beta_i<r+1\ \ \mbox{and}\ \
\overline{\beta}=\pa{{1\over k}\sum_{i=1}^{k}{1\over \beta_{i}}}^{-1}>k\left[\left(p^{-1}-2^{-1}\right)\vee0\right].
\label{Eq-Bes1}
\end{equation}
\end{prop}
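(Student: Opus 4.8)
The plan is to read \eref{nbm}--\eref{Eq-smooth} as a statement about \emph{nonlinear} (adaptive) approximation of anisotropic Besov functions by piecewise polynomials on adapted dyadic partitions of $[0,1]^{k}$, and to establish the two displays separately: the counting bound \eref{nbm} by a combinatorial estimate on partitions, and the approximation bound \eref{Eq-smooth} by combining a local Whitney-type estimate with a greedy choice of partition. First I would fix the family of spaces. Call a \emph{dyadic hyperrectangle} any product $\prod_{i=1}^{k}[l_{i}2^{-j_{i}},(l_{i}+1)2^{-j_{i}}]$, and generate partitions of $[0,1]^{k}$ into such hyperrectangles by recursively halving a cell along one of the $k$ coordinate directions; the cells of a partition are the leaves of the resulting labelled binary tree. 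For a partition $m$ let $S_{m}$ be the space of functions that are polynomial of degree at most $r$ in each variable on each cell, and let $\M_{k,r}(D)$ collect those partitions whose number of cells is of order $D$. Since $\dim S_{m}\le (r+1)^{k}\times(\#\,\text{cells})$, the dimension bound in \eref{nbm} is immediate.

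For the counting bound in \eref{nbm}, a partition with $D$ cells corresponds to a binary tree with $D$ leaves whose $D-1$ internal nodes each carry a splitting direction in $\{1,\dots,k\}$. The number of binary trees with $D$ leaves is the Catalan number, which is at most $4^{D}$, and each internal node admits at most $k$ labels, so $\abs{\M_{k,r}(D)}\le(4k)^{D}=e^{C_{r,k}D}$.

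The substance is the approximation bound \eref{Eq-smooth}. On a cell $R$ with side lengths $(2^{-j_{i}})_{i}$, a Deny--Lions / Whitney argument produces a polynomial $P_{R}$ of coordinate degree $\le r$ with $\norm{\phi-P_{R}}_{L^{2}(R)}\lesssim\sum_{i}2^{-j_{i}\beta_{i}}\,\omega_{r+1,i}(\phi,2^{-j_{i}})_{p,R}$ in terms of directional moduli of smoothness; this is where the constraint $\sup_{i}\beta_{i}<r+1$ from \eref{Eq-Bes1} enters, ensuring that polynomials of degree $r$ reproduce the relevant smoothness. The anisotropic Besov seminorm $\abs{\phi}_{\beta,p,p}$ bundles these local errors into an $\ell^{p}$-type aggregate across dyadic scales. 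I would then choose the refinement anisotropically, splitting direction $i$ at a rate proportional to $\overline{\beta}/\beta_{i}$ so that the per-direction contributions are balanced, and apply a greedy selection that, given a budget of $D$ cells, retains the cells carrying the largest local errors. Because those errors form an $\ell^{p}$ sequence controlled by $\abs{\phi}_{\beta,p,p}$, Stechkin's inequality bounds the $L^{2}$ tail of the discarded cells by a multiple of $\abs{\phi}_{\beta,p,p}\,D^{-(1/p-1/2)_{+}}$; combining this with the scale balancing turns the exponent into $\overline{\beta}/k$, which is \eref{Eq-smooth}. The condition $\overline{\beta}>k[(p^{-1}-2^{-1})\vee0]$ in \eref{Eq-Bes1} is exactly what makes this compression converge: for $p\ge 2$ it is automatic and even linear approximation suffices, while for $p<2$ it is the Sobolev-type embedding guaranteeing $\B_{p,p}^{\beta}([0,1]^{k})\hookrightarrow L^{2}$ together with summability of the greedy tail.

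The main obstacle is precisely this approximation step, and within it the appearance of the harmonic mean $\overline{\beta}$. Obtaining the rate $\overline{\beta}/k$ uniformly requires a clean \emph{anisotropic} Whitney estimate with honest directional moduli, a correct allocation of the cell budget across both scale and direction (so that no direction dominates, which is what forces the harmonic mean), and a verification that the greedily selected partition still belongs to the dyadic tree family with complexity of order $D$ rather than a larger ambient collection. By contrast, the counting and dimension estimates \eref{nbm} are routine once the tree description is in place.
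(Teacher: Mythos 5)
The paper does not prove this proposition at all: it is imported verbatim as ``the following result of Akakpo (2009)'' and used as a black box, so there is no internal proof to compare against. Your outline is the right strategy and is essentially the one underlying the cited thesis: recursive dyadic splitting of $[0,1]^{k}$ along coordinate directions, piecewise polynomials of coordinate degree $\le r$ on the cells, a tree/Catalan count for \eref{nbm}, and a local polynomial estimate plus nonlinear selection for \eref{Eq-smooth}. The counting and dimension bounds you give are complete and correct.

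However, as a proof of \eref{Eq-smooth} the proposal is a plan rather than an argument, and the two hardest points are precisely the ones you defer. First, the greedy step as described --- ``retain the cells carrying the largest local errors'' and control the discarded mass by Stechkin --- does not by itself produce an element of $\M_{k,r}(D)$: an arbitrary selection of $D$ cells from across all scales and directions is not a partition of $[0,1]^{k}$, let alone one realized as the leaves of a labelled binary tree with $O(D)$ nodes. One needs a tree-constrained thresholding (pruning) argument showing that restricting the selection to complete subtrees loses at most a constant factor in the error, and this is where the exponent $\overline{\beta}/k$ (rather than the Stechkin exponent $(1/p-1/2)_{+}$ alone) actually emerges; you assert the combination ``turns the exponent into $\overline{\beta}/k$'' without computing it. Second, the anisotropic Whitney estimate with directional moduli of smoothness, and the compatibility of those moduli with the seminorm $|\phi|_{\beta,p,p}$ as defined via Hochmuth's wavelet characterization, is stated but not established; the balancing ``split direction $i$ at rate $\overline{\beta}/\beta_{i}$'' handles the linear regime $p\ge 2$ but must be interleaved with the nonlinear selection when $p<2$. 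You correctly identify these as the main obstacles, but naming an obstacle is not the same as overcoming it, so the proposal cannot stand as a self-contained proof of \eref{Eq-smooth}; for the purposes of this paper the honest course is what the author does, namely cite Akakpo (2009) for the approximation statement.
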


%

Hereafter, $k$ being fixed, we consider the family of linear spaces $\Ss=(S_{m})_{m\in\M}$ indexed with $\M=\bigcup_{r\ge 0}\M_{k,r}$ (omitting thus the dependency with respect to $k$) and endow $\M$ with the sub-probability $\pi$ defined by
\[
\pi(m)=\inf\ac{e^{-(1+C_{r,k})D-r}\telque\ (r,D)\in \N\times\N^{*}, m\in\M_{k,r}(D)}.
\]
By using the first part of~\eref{nbm}, 
\[
\sum_{m\in\M}\pi(m)\le \sum_{r\ge 0}\sum_{D\ge 1}\sum_{m\in\M_{k,r}(D)}e^{-(1+C_{r,k})D-r}\le \sum_{r\ge 0}e^{-r}\sum_{D\ge 1}e^{-D}\le 1
\]
and hence $\pi$ is a sub-probability on $\M$.

By applying our Corollary~\ref{maincor} with the family $(S_{m})_{m\in\M}$ and this sub-probability $\pi$, we deduce the following uniform rates of convergence over the classes of densities of determinantal projection processes of rank $j_{0}\ge 1$ and parameter $\Phi$ belonging to $\U_{p,p}^{\beta}(R,j_{0})$. The proof of the result is delayed to Section~\ref{S-R}.

\begin{prop}\label{rate-proj}
There exists an estimator $\widehat \Pi$ such that for all $j_{0}\ge 1$, $R>0$, $\beta\in (0,+\infty)^{k}$ and $p\in(0,+\infty]$ such that $\overline \beta>k\left[\left(p^{-1}-2^{-1}\right)\vee 0\right]$, we have 
\begin{eqnarray*}
\sup_{\Phi\in \U_{p,p}^{\beta}(R,j_{0})}\!\!\!\!\!\!\!\E\cro{h^{2}(\Pi^{\Phi}_{\{1,\ldots,j_{0}\}},\widehat \Pi)}&\le& C j_{0}\pa{\log n\over n}^{2\overline \beta\over 2\overline \beta+k},
\end{eqnarray*}
where $C$ denotes some positive number depending on $k,R,p$ and $\beta$ only.
\end{prop}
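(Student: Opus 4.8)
The plan is to apply Corollary~\ref{maincor} with the Akakpo family $(S_{m})_{m\in\M}$ and the sub-probability $\pi$ constructed above, then optimize the resulting bound over the free discretization parameter $D$ and the truncation index $j$. Since the target density is a determinantal projection process of rank $j_{0}$, namely $\Pi=\Pi^{\Phi}_{\{1,\ldots,j_{0}\}}=\Pi^{\Phi,\lambda}$ with $\lambda_{1}=\ldots=\lambda_{j_{0}}=1$ and $\lambda_{j}=0$ for $j>j_{0}$, the approximation term $h^{2}(\Pi,\Pi^{\Phi,\lambda})$ vanishes exactly (we take the actual parameter), and the tail $\sum_{j'>j}\lambda_{j'}^{2}$ vanishes for the choice $j=j_{0}$. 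This reduces the bound to $C\E[h^{2}(\Pi,\widehat\Pi)]\le \sum_{j'=1}^{j_{0}}O(\IS,\pi,\phi_{j'})$, so everything comes down to controlling each single-function term $O(\IS,\pi,\phi_{j'})$ uniformly over $\phi_{j'}\in\B_{p,p}^{\beta}([0,1]^{k})$ with $|\phi_{j'}|_{\beta,p,p}\le R$.

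First I would fix $j'\in\{1,\ldots,j_{0}\}$ and fix an integer $r$ with $\sup_i \beta_i<r+1$, so that hypothesis~\eref{Eq-Bes1} of Proposition~\ref{P-Approx2} holds. For each $D\ge 1$, the definition of $O(\IS,\pi,\phi_{j'})$ lets me restrict the infimum over $m$ to the subcollection $\M_{k,r}(D)$ and pick the best approximating $S_{m}$ therein. Using~\eref{Eq-smooth} the approximation error is at most $C(k,r,p)^{2}R^{2}D^{-2\overline\beta/k}$; using the second part of~\eref{nbm} the dimension satisfies $D_{m}\le C_{r,k}D$; and from the definition of $\pi$ one has $\log(1/\pi(m))\le (1+C_{r,k})D+r$ for any $m\in\M_{k,r}(D)$. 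Substituting these into the penalty term $\bigl(D_{m}\log n+\log(1/\pi(m))\bigr)/n$ shows it is bounded by $C'(r,k)(D\log n + 1)/n$. Hence, up to constants depending only on $k,r,p,R$,
\[
O(\IS,\pi,\phi_{j'})\le C''\inf_{D\ge 1}\cro{D^{-2\overline\beta/k}+{D\log n\over n}}.
\]

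The main (though standard) step is then the bias–variance balance inside this infimum. Optimizing $D^{-2\overline\beta/k}+D\log n/n$ over real $D>0$ gives the balancing choice $D\asymp (n/\log n)^{k/(2\overline\beta+k)}$, which yields a common value of order $(\log n/n)^{2\overline\beta/(2\overline\beta+k)}$; rounding $D$ to the nearest integer $\ge 1$ changes constants only. Since this bound is uniform in $\phi_{j'}$ over the Besov ball and identical for each of the $j_{0}$ indices, summing over $j'=1,\ldots,j_{0}$ produces the factor $j_{0}$ and gives
\[
C\E\cro{h^{2}(\Pi^{\Phi}_{\{1,\ldots,j_{0}\}},\widehat\Pi)}\le C\,j_{0}\pa{\log n\over n}^{2\overline\beta\over 2\overline\beta+k},
\]
with $C$ depending on $k,R,p,\beta$ only (through the fixed choice of $r$ and the Akakpo constants). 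Taking the supremum over $\Phi\in\U_{p,p}^{\beta}(R,j_{0})$ is immediate because all bounds were uniform over the ball. The one point requiring a little care is that the estimator $\widehat\Pi$ must be a single estimator serving all $(j_{0},R,\beta,p)$ simultaneously: this is guaranteed because Corollary~\ref{maincor} produces $\widehat\Pi=\widehat\Pi(\IS,\pi)$ depending only on the fixed family $\Ss$ and $\pi$, which themselves do not depend on the smoothness parameters; the parameters enter only through the analysis of the right-hand side, not through the construction. I expect the only genuine obstacle to be bookkeeping the dependence of constants on $r$ (and ensuring $r$ can be chosen once $\beta$ is fixed), rather than any substantive difficulty.
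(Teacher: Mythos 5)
Your proposal is correct and follows essentially the same route as the paper: apply Corollary~\ref{maincor} with the Akakpo collection $\IS$ and the sub-probability $\pi$, note that for $\lambda=(\1_{j\le j_{0}})_{j\ge1}$ and $j=j_{0}$ both the approximation term $h^{2}(\Pi,\Pi^{\Phi,\lambda})$ and the tail $\sum_{j'>j}\lambda_{j'}^{2}$ vanish, and bound each $O(\IS,\pi,\phi_{j'})$ by the standard bias--variance optimization over $D$, yielding the rate $(\log n/n)^{2\overline\beta/(2\overline\beta+k)}$ uniformly over the Besov ball and hence the factor $j_{0}$ after summation. The paper compresses the optimization over $m$ into the phrase ``classical optimization''; you have simply written out those details, and your observation that $\widehat\Pi$ does not depend on $(j_{0},R,\beta,p)$ matches the paper's construction.
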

When $j_{0}=1$, $\Pi^{\Phi}_{\{1\}}$ is merely a density on $(\X,\B(\X))$ of the form $\abs{\phi_{1}}^{2}$ for some function $\phi_{1}$ of unit norm belonging to  ${\B}_{p,p}^{\beta}([0,1]^{k})$. Note that $\Pi^{\Phi}_{\{1\}}=\abs{\phi_{1}}^{2}$ also belongs to 
${\B}_{p,p}^{\beta}([0,1]^{k})$ and up to the logarithmic factor, the rate we get is the usual one for estimating a density in ${\B}_{p,p}^{\beta}([0,1]^{k})$.

Let us now establish uniform rates of convergence towards more general classes of determinantal densities. To do so, we also need to make {\it a posteriori} assumptions on $\lambda$. More precisely, we assume that it belongs to classes of the form
\begin{eqnarray*}
\Lambda_{\alpha}^{(a)}(A)&=& \ac{\lambda\in\Lambda\telque \sum_{j'>j}\lambda_{j'}^{2}\le A j^{-\alpha},\ \  \forall j\ge 1}\\
&{\rm or}&\\
\Lambda_{\alpha}^{(g)}(A)&=&\ac{\lambda\in\Lambda\telque \sum_{j'>j}\lambda_{j'}^{2}\le Ae^{-\alpha j},\ \  \forall j\ge 1}
\end{eqnarray*}
for some $A,\alpha>0$. These sets contain sequences of $(\lambda_{j})_{j\ge1}$ which are decreasing polynomially and exponentially fast respectively. We get the following result the proof of which is delayed to Section~\ref{S-R}.

\begin{prop}\label{prop-rate}
There exists an estimator $\widehat \Pi$ such that for all $A,\alpha,R>0$, $\beta\in (0,+\infty)^{k}$ and $p\in(0,+\infty]$ such that $\overline \beta>k\left[\left(p^{-1}-2^{-1}\right)\vee0\right]$, we have 
\begin{eqnarray}
\sup_{(\Phi,\lambda)\in \U_{p,p}^{\beta}(R)\times \Lambda_{\alpha}^{(a)}(A)}\!\!\!\!\!\!\!\E\cro{h^{2}(\Pi^{\Phi,\lambda},\widehat \Pi)}&\le& C\pa{\log n\over n}^{2\alpha\overline \beta\over (2\overline \beta+k)(1+\alpha)}\label{eqa}\\
\sup_{(\Phi,\lambda)\in \U_{p,p}^{\beta}(R)\times \Lambda_{\alpha}^{(g)}(A)}\!\!\!\!\!\!\!\E\cro{h^{2}(\Pi^{\Phi,\lambda},\widehat \Pi)}&\le& C\pa{(\log n)^{2+k/(2\overline \beta)}\over n}^{2\overline \beta\over 2\overline \beta+k}\label{eqg}
\end{eqnarray}
where $C$ denotes some positive number depending on $k,A,R,p,\alpha$ and $\beta$ only.
\end{prop}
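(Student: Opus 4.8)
The plan is to feed Corollary~\ref{maincor}, applied with the family $\Ss=(S_{m})_{m\in\M}$ and the sub-probability $\pi$ constructed above, the fact that here the target $\Pi=\Pi^{\Phi,\lambda}$ is \emph{exactly} determinantal. Choosing in the infimum of Corollary~\ref{maincor} the very pair $(\Phi,\lambda)$ that defines $\Pi$ annihilates the approximation term, $h^{2}(\Pi,\Pi^{\Phi,\lambda})=0$, so that for every $(\Phi,\lambda)$ in the class under consideration,
\[
C\,\E\cro{h^{2}(\Pi^{\Phi,\lambda},\widehat\Pi)}\le \inf_{j\ge 1}\pa{\sum_{j'=1}^{j}O(\Ss,\pi,\phi_{j'})+\sum_{j'>j}\lambda_{j'}^{2}}.
\]
It then suffices to bound the per-coordinate quantity $O(\Ss,\pi,\phi_{j'})$ uniformly over the Besov ball, and to optimise the resulting expression over $j$ by means of the decay profile of $\lambda$.

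First I would control $O(\Ss,\pi,\phi)$ for a single $\phi\in\B_{p,p}^{\beta}([0,1]^{k})$ with $|\phi|_{\beta,p,p}\le R$; this is precisely the estimate already used in the proof of Proposition~\ref{rate-proj}. Fix once and for all the integer $r=r(\beta)$ with $r+1>\max_{1\le i\le k}\beta_{i}$, so that condition~\eref{Eq-Bes1} is met and $C_{r,k},\,C(k,r,p)$ become constants depending only on $k,\beta,p$. For each integer $D\ge 1$, Proposition~\ref{P-Approx2} furnishes a model $m\in\M_{k,r}(D)$ with $\dim(S_{m})\le C_{r,k}D$ and, by~\eref{Eq-smooth}, $\inf_{\psi\in S_{m}}\norm{\phi-\psi}\le C(k,r,p)\,R\,D^{-\overline\beta/k}$; the definition of $\pi$ gives $\log(1/\pi(m))\le (1+C_{r,k})D+r$. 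Inserting these into the definition of $O$ and retaining the dominant $D\log n/n$ contribution yields, for constants $C_{1},C_{2}$ depending only on $k,\beta,p$,
\[
O(\Ss,\pi,\phi)\le \inf_{D\ge 1}\cro{C_{1}R^{2}D^{-2\overline\beta/k}+C_{2}\,\frac{D\log n}{n}}.
\]
Balancing the two terms at $D\asymp (n/\log n)^{k/(2\overline\beta+k)}$ (rounded to the nearest positive integer, the residual small-$n$ range being absorbed via the trivial bound $h^{2}\le 1$) gives
\[
O(\Ss,\pi,\phi)\le C\,\eps_{n},\qquad \eps_{n}:=\pa{\frac{\log n}{n}}^{2\overline\beta/(2\overline\beta+k)},
\]
uniformly over the ball, and hence $\sum_{j'=1}^{j}O(\Ss,\pi,\phi_{j'})\le Cj\,\eps_{n}$ for every $(\phi_{j'})\in\U_{p,p}^{\beta}(R)$.

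It remains to optimise $\inf_{j\ge 1}\pa{Cj\,\eps_{n}+\sum_{j'>j}\lambda_{j'}^{2}}$ over $j$ in each of the two decay regimes. For $\lambda\in\Lambda_{\alpha}^{(a)}(A)$ one has $\sum_{j'>j}\lambda_{j'}^{2}\le Aj^{-\alpha}$, and minimising $Cj\,\eps_{n}+Aj^{-\alpha}$ yields $j^{\ast}\asymp\eps_{n}^{-1/(\alpha+1)}$ with value of order $\eps_{n}^{\alpha/(\alpha+1)}$, which is exactly the rate~\eref{eqa}. For $\lambda\in\Lambda_{\alpha}^{(g)}(A)$ one has $\sum_{j'>j}\lambda_{j'}^{2}\le Ae^{-\alpha j}$, and minimising $Cj\,\eps_{n}+Ae^{-\alpha j}$ yields $j^{\ast}\asymp\alpha^{-1}\log(1/\eps_{n})\asymp\log n$ with value of order $\eps_{n}\log n$; a direct computation of exponents shows $\eps_{n}\log n\asymp\pa{(\log n)^{2+k/(2\overline\beta)}/n}^{2\overline\beta/(2\overline\beta+k)}$, which is the rate~\eref{eqg}. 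In both cases $j^{\ast}$ is replaced by the nearest positive integer, affecting only the constants.

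The one genuinely delicate point is the \emph{uniformity} in the second paragraph: a single $r=r(\beta)$ must simultaneously accommodate all the $\phi_{j'}$, and the constants $C_{1},C_{2}$ must be seen to depend on $k,\beta,p$ only (not on $j'$), so that after summation they do not interfere with the forthcoming optimisation over $j$. Once the clean per-coordinate bound $O(\Ss,\pi,\phi)\le C\eps_{n}$ is secured, the two remaining minimisations over $j$ are elementary bias--variance trade-offs, the exponential regime merely generating the additional logarithmic factor recorded in~\eref{eqg}.
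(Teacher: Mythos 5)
Your proposal is correct and follows essentially the same route as the paper: apply Corollary~\ref{maincor} with the exact pair $(\Phi,\lambda)$ so the robustness term vanishes, bound $O(\Ss,\pi,\phi_{j'})$ uniformly by $C(\log n/n)^{2\overline\beta/(2\overline\beta+k)}$ via Proposition~\ref{P-Approx2} and the choice of $\pi$, then perform the bias--variance trade-off in $j$ for each decay class, with the same choices $j^{\ast}\asymp\eps_n^{-1/(1+\alpha)}$ and $j^{\ast}\asymp\alpha^{-1}\log n$ and the same resulting exponents. Your write-up is in fact more detailed than the paper's (which compresses the per-coordinate bound into ``classical optimization''), and your exponent bookkeeping for~\eref{eqg} checks out.
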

%
\section{Proofs}\label{sect-P}

\subsection{Proof of Proposition~\ref{propD}}\label{P-pD}
The first equality is clear since by~\eref{def-dproj} the Hellinger affinity between $\Pi^{\Phi}_{J}$ and $\Pi^{\Psi}_{J}$ equals
\begin{eqnarray*}
\rho(\Pi^{\Phi}_{J},\Pi^{\Psi}_{J})&=&\int_{\XX_{k}}\sqrt{\Pi^{\Phi}_{J}(\alpha)\Pi^{\Psi}_{J}(\alpha)}dL(\alpha)\\
&=&\int_{\XX_{k}}\abs{\det\cro{\Phi_{\alpha,J}}}\abs{\det\cro{\Psi_{\alpha,J}}}dL(\alpha).
\end{eqnarray*}
For the second part we use Proposition~\ref{CBG} and get
\begin{eqnarray*}
1-\int_{\XX_{k}}\abs{\det\cro{\Phi_{\alpha,J}}}\abs{\det\cro{\Psi_{\alpha,J}}}dL(\alpha)&\le & 1-\abs{\int_{\XX_{k}}\det\cro{\Phi_{J,\alpha}^{*}}\det\cro{\Psi_{\alpha,J}}dL(\alpha)}\\
&=& 1-\abs{\det\cro{\pa{\<\phi_{i},\psi_{j}\>}_{i,j\in J}}}.
\end{eqnarray*}

Let us now prove the last inequality and  set $a=\sum_{j\in J}\norm{\phi_{j}-\psi_{j}}^{2}$.
If $a>2/5$ then the result is true since the Hellinger distance is bounded by 1. We may therefore assume that $a\le 2/5$. In the remaining part of the proof we consider the linear space $\M_{J\times J}(\C)$ of $|J|\times |J|$ matrices indexed by $J$ with entries in $\C$. We endow $\M_{J\times J}(\C)$ with the Hilbert-Schmidt norm defined by 
\[
\norm{A}=\pa{\sum_{i\in J}\sum_{j\in J}\abs{A_{i,j}}^{2}}^{1/2}.
\]
It is well-known that this norm is sub-multiplicative in the sense that for all $A,B\in \M_{J\times J}(\C)$, $\norm{AB}\le \norm{A}\norm{B}$ and it also satisfies 
\begin{equation}\label{I-tr}
\abs{\tr(AB)}\le \norm{A}\norm{B}.
\end{equation}
One can decompose the matrix $A=(\<\phi_{i},\psi_{j}\>)_{i,j\in J}$ as  $A=D+B$ where $D$ is diagonal with entries $D_{i,i}=\<\phi_{i},\psi_{i}\>$ and $B=A-D$. Since $\norm{\phi_{i}}=\norm{\psi_{i}}=1$ for all $i$ 
\begin{equation}\label{inter1}
\abs{D_{i,i}}\ge \Re(D_{i,i})=1-{\norm{\phi_{i}-\psi_{i}}^{2}\over 2}\ge 1-{a\over 2}>0\ \ \forall i=1,\ldots,k
\end{equation}
and hence, $D$ is non-singular. We may therefore write
\[
\det A=\det D\det(I+M)\ \ {\rm with}\ \ M=D^{-1}B
\]
and since for all $i$, $\sum_{j\in J}|\<\phi_{i},\psi_{j}\>|^{2}\le \norm{\phi_{i}}^{2}=1$,
\[
\norm{M}^{2}=\sum_{i\in J}{1\over |D_{i,i}|^{2}}\sum_{j\neq i}|\<\phi_{i},\psi_{j}\>|^{2}\le \sum_{i\in J}{1-|D_{i,i}|^{2}\over |D_{i,i}|^{2}}=\Delta_{J}^{2}
\]
and by using~\eref{inter1} with the fact that $u\mapsto u^{-2}(1-u^{2})$ is decreasing on $(0,+\infty)$,
\begin{equation}\label{inter2}
\norm{M}^{2}\le \Delta_{J}^{2}\le {1-a/4\over (1-a/2)^{2}}\sum_{j\in J}\norm{\phi_{j}-\psi_{j}}^{2}\le {9\over 16}<1.
\end{equation}
The matrix $I+M$ is therefore non-singular and we may write $I+M=e^{L}$ for some matrix $L\in\M_{J\times J}(\C)$. In fact, 
\[
L=M-{M^{2}\over 2}+\sum_{p\ge 3}(-1)^{p-1}{M^{p}\over p}
\]
where the series converge normally in $(\M_{J\times J}(\C),\norm{\ })$. Moreover, 
\[
\det(I+M)=e^{\tr(L)}.
\]
Since the mapping $L\mapsto \tr(L)$ is linear and continuous on $(\M_{J\times J}(\C),\norm{\ })$ and since $\tr(M)=0$, 
\[
\tr(L)=-{\tr(M^{2})\over 2}+\sum_{p\ge 3}(-1)^{p-1}{\tr(M^{p})\over p}.
\]
By using~\eref{I-tr} and the sub-multiplicative property of the Hilbert-Schmidt norm, we get
\[
\abs{\tr(L)+{\tr(M^{2})\over 2}}\le \sum_{p\ge 3}{\norm{M}^{p}\over p}\le{ \norm{M}^{3}\over 3(1-\norm{M})}
\]
and thus, by using that $\norm{M}\le \Delta_{J}$,
\begin{eqnarray*}
\Re(\tr(L))&\ge& -\Re\pa{{\tr(M^{2})\over 2}}-{ \norm{M}^{3}\over 3(1-\norm{M})}\\
&\ge& -{\norm{M}^{2}\over 2}\pa{1+{2\norm{M}\over 3(1-\norm{M})}}\\
&\ge& -{\Delta_{J}^{2}\over 2}\pa{1+{2\Delta_{J}\over 3(1-\Delta_{J})}}.
\end{eqnarray*}
This inequality together with the fact that $\log u\ge -(1-u)/u$ for all $u>0$ leads to
\begin{eqnarray*}
\abs{\det A}&=&\abs{\det D}e^{\Re(\tr(L))}\\
&\ge&\exp\cro{{1\over 2}\sum_{i\in J}\log\pa{|D_{i,i}|^{2}}-{\Delta_{J}^{2}\over 2}\pa{1+{2\Delta_{J}\over 3(1-\Delta_{J})}}}\\
&\ge& \exp\cro{-{\Delta_{J}^{2}\over 2}-{\Delta_{J}^{2}\over 2}\pa{1+{2\Delta_{J}\over 3(1-\Delta_{J})}}}\\
&=&\exp\cro{-c(\Delta_{J})\Delta_{J}^{2}}
\end{eqnarray*}
with $c(u)=1+u/[3(1-u)]$ for $u\ge 0$. By using the facts that $a\le 2/5$, $c$ is increasing and~\eref{inter2}, we get
\begin{eqnarray*}
h^{2}(\Pi^{\Phi}_{J},\Pi^{\Psi}_{J})&\le& 1-\abs{\det A}\\
&\le&1-\exp\cro{-c\pa{{a(1-a/4)\over (1-a/2)^{2}}}\Delta_{J}^{2}}\\
&\le& c\pa{a(1-a/4)\over (1-a/2)^{2}} {(1-a/4)\over (1-a/2)^{2}}\sum_{j\in J}\norm{\phi_{j}-\psi_{j}}^{2}\\
&\le& {5\over 2}\sum_{j\in J}\norm{\phi_{j}-\psi_{j}}^{2}.
\end{eqnarray*}


\subsection{Proof of Proposition~\ref{melange}}\label{P-m}
Let us set 
\[
R=\int_{T}P_{t}q(t)dm(t).
\]
Since $h^{2}(P,Q)\le 2h^{2}(P,R)+2h^{2}(R,Q)$, it remains to bound each of those terms from above. The measures $\nu$ and $m$ being $\sigma$-finite, we may apply Fubini-Tonnelli theorem. By using the Cauchy-Schwarz inequality, we bound the   first term as follows.
\begin{eqnarray*}
2h^{2}(P,R)&=& \int_{E}\pa{\sqrt{P}-\sqrt{R}}^{2}d\nu= \int_{E}{(P-R)^{2}\over (\sqrt{P}+\sqrt{R})^{2}}d\nu\\
&\le& \int_{E}{\pa{\int_{T}P_{t}(p(t)-q(t))dm(t)}^{2}\over P+R}d\nu\\
&\le& \int_{E}{\pa{\int_{T}\sqrt{P_{t}}(\sqrt{p(t)}-\sqrt{q(t)})\sqrt{P_{t}}(\sqrt{p(t)}+\sqrt{q(t)})dm(t)}^{2}\over P+R}d\nu\\
&\le& \int_{E}\cro{\int_{T}P_{t}\pa{\sqrt{p(t)}-\sqrt{q(t)}}^{2}dm(t)\times {\int_{T}P_{t}(\sqrt{p(t)}+\sqrt{q(t)})^{2}dm(t)\over P+R}}d\nu\\
&\le& 2\int_{T}\cro{\int_{E}P_{t}(x)\pa{\sqrt{p(t)}-\sqrt{q(t)}}^{2}d\nu(x)}dm(t)= 4h^{2}(p,q).
\end{eqnarray*}
Let us now turn to the second term. By using similar arguments, 
\begin{eqnarray*}
2h^{2}(R,Q)&\le& \int_{E}{\pa{\int_{T}(P_{t}-Q_{t})q(t)dm(t)}^{2}\over R+Q}d\nu\\
&=& \int_{E}{\pa{\int_{T}(\sqrt{P_{t}}-\sqrt{Q_{t}})\sqrt{q(t)}\times (\sqrt{P_{t}}+\sqrt{Q_{t}})\sqrt{q(t)}dm(t)}^{2}\over R+Q}d\nu\\
&\le&  \int_{E}\cro{\int_{T}\pa{\sqrt{P_{t}}-\sqrt{Q_{t}}}^{2}q(t)dm(t)\times {\int_{T}(\sqrt{P_{t}}+\sqrt{Q_{t}})^{2}q(t)dm(t)\over R+Q}}d\nu\\
&\le& 4\int_{T}h^{2}(P_{t},Q_{t})q(t)dm(t).
\end{eqnarray*}
We conclude by adding these two upper bounds.

\subsection{Proof of Proposition~\ref{H-det}}\label{P-m2}
Inequality~\eref{h-det} derives from~\eref{eq1},~\eref{eq2} and Proposition~\ref{melange}. Hence, it remains to prove~\eref{eq1} and~\eref{eq2}.

Let us prove~\eref{eq1}. To do so, we set $a^{-1}=e/[2(e-1)]<1$ and prove the stronger inequality
\begin{eqnarray*}
h^{2}(p^{\lambda},p^{\gamma})&\le& a^{-1}\cro{\abs{\lambda-\gamma}^{2}+\abs{\check{\lambda}-\check{\gamma}}^{2}}.
\end{eqnarray*}
If there exists $j\ge 1$ such that $\abs{\lambda_{j}-\gamma_{j}}^{2}+\abs{\check{\lambda}_{j}-\check{\gamma}_{j}}^{2}>a$ then the result is clear since $h^{2}(p^{\lambda},p^{\gamma})\le 1$. Otherwise,
\begin{eqnarray*}
h^{2}(p^{\lambda},p^{\gamma})&=& 1-\sum_{J\in\PP}\prod_{j\in J}\lambda_{j}\gamma_{j}\prod_{j\not\in J}\clambda_{j}\cgamma_{j}= 1-\prod_{j\ge 1}\pa{\lambda_{j}\gamma_{j}+\clambda_{j}\cgamma_{j}}\\
&=& 1-\prod_{j\ge 1}\cro{1-{1\over 2}\pa{\abs{\lambda_{j}-\gamma_{j}}^{2}+\abs{\clambda_{j}-\cgamma_{j}}^{2}}}\\
&=&1-\exp\cro{\sum_{j\ge 1}\log\pa{1-{1\over 2}\pa{\abs{\lambda_{j}-\gamma_{j}}^{2}+\abs{\clambda_{j}-\cgamma_{j}}^{2}}}}
\end{eqnarray*}
and by using that $\log(1-u)\ge [2a^{-1}\log(1-a/2)]u$ for all $u\in[0,a/2]$, we get
\begin{eqnarray*}
h^{2}(p^{\lambda},p^{\gamma})
&\le& {-\log(1-a/2)\over a}\sum_{j\ge 1}\cro{\abs{\lambda_{j}-\gamma_{j}}^{2}+\abs{\clambda_{j}-\cgamma_{j}}^{2}}\\
&=& {1\over a}\cro{\abs{\lambda-\gamma}^{2}+\abs{\clambda-\cgamma}^{2}}
\end{eqnarray*}
with our choice of $a$.

Let us now prove~\eref{eq2}. By using Proposition~\ref{propD} we have that for all $J\in\PP$, $J\neq \varnothing$
\begin{equation}\label{inter3}
h^{2}(\Pi_{J}^{\Phi},\Pi_{J}^{\Psi})\le {5\over 2}\sum_{j\in J}\norm{\phi_{j}-\psi_{j}}^{2}.
\end{equation}
With the convention $\sum_{\varnothing}=0$, this inequality remains true when $J=\varnothing$ since in this case $\Pi_{J}^{\Phi}=\Pi_{J}^{\Psi}=\delta_{\varnothing}$ and thus $h^{2}(\Pi_{J}^{\Phi},\Pi_{J}^{\Psi})=0$. We may therefore write
\begin{eqnarray*}
\sum_{J\in\PP}p_{J}^{\gamma}h^{2}(\Pi_{J}^{\Phi},\Pi_{J}^{\Psi})&\le & {5\over 2}\sum_{J\in\PP}p_{J}^{\gamma}\sum_{j\in J}\norm{\phi_{j}-\psi_{j}}^{2}\\
&\le& {5\over 2}\sum_{j\ge 1} \norm{\phi_{j}-\psi_{j}}^{2}\sum_{J\in\PP, j\in J}p_{J}^{\gamma}\\
&\le& {5\over 2}\sum_{j\ge 1} \gamma_{j}^{2}\norm{\phi_{j}-\psi_{j}}^{2}\sum_{J\in\PP, j\in J}\prod_{j'\in J, j'\neq j}\gamma_{j'}^{2}\prod_{j'\not\in J}(1-\gamma_{j'}^{2})\\
&=&  {5\over 2}\sum_{j\ge 1} \gamma_{j}^{2}\norm{\phi_{j}-\psi_{j}}^{2}\prod_{j'\ge 1, j'\neq j}(\gamma_{j'}^{2}+(1-\gamma_{j'}^{2}))\\
&=& {5\over 2}\sum_{j\ge 1} \gamma_{j}^{2}\norm{\phi_{j}-\psi_{j}}^{2}
\end{eqnarray*}
as claimed.

\subsection{Proof of Theorem~\ref{main}}\label{sect-T}
The proof is based on Proposition~\ref{aggreg} with suitable choices of $(\Pi_{\m})_{\m\in\MM}$ and sub-probability $\pi'$ on $\MM$. 

Let $j$ be some positive integer. We set  
\[
\Lambda_{j}[1/n]=\{\gamma\in \Lambda|\ \ \gamma_{\ell}\in\{i/n, i=1,\ldots n\}\ {\rm if}\  \ell\le j,\ \gamma_{\ell}=0\ \ {\rm otherwise}\},
\]
and 
\[
\U_{j}=\{\Phi_{\{1,\ldots,j\}}|\ \ \Phi\in\U\}\subset \HH^{j}.
\]
We endow $\HH^{j}$ with the distance $d(\cdot,\cdot)$ defined for $(\phi_{j})_{\ell=1,\ldots,j}$ and $(\psi_{j})_{\ell=1,\ldots,j}$ in $\HH^{j}$ by 
\[
d^{2}((\phi_{j})_{\ell=1,\ldots,j},(\psi_{j})_{\ell=1,\ldots,j})=\sum_{\ell=1}^{j}\norm{\phi_{\ell}-\psi_{\ell}}^{2}.
\]
For all $m_{1},\ldots,m_{j}\in \M$ and $\Phi\in \U$, there exist $\widetilde \phi_{1}\in H_{m_{1}}[1/\sqrt{n}],\ldots,\widetilde \phi_{j}\in H_{m_{j}}[1/\sqrt{n}]$ such that for all $\ell\in\{1,\ldots,j\}$, 
\begin{equation}\label{rev-eq00}
\norm{\phi_{\ell}-\widetilde\phi_{\ell}}\le \inf_{\psi\in H_{m_{\ell}}}\norm{\phi_{\ell}-\psi}+1/\sqrt{n}
\end{equation}
since $H_{m_{\ell}}[1/\sqrt{n}]$ is a $1/\sqrt{n}$-net for $H_{m_{\ell}}$. For such an element $(\widetilde\phi_{j})_{\ell=1,\ldots,j}\in\HH^{j}$, there  exists $\overline \Phi\in\U$ such that 
\[
d((\widetilde\phi_{j})_{\ell=1,\ldots,j},\overline \Phi_{\{1,\ldots,j\}})\le \inf_{\Psi\in \U} d((\widetilde\phi_{j})_{\ell=1,\ldots,j},\Psi_{\{1,\ldots,j\}})+{1\over \sqrt{n}}.
\]
Since $\overline \Phi$ only depends on $\widetilde \phi_{1},\ldots,\widetilde\phi_{j}$, the cardinality of the set $\U(j,m_{1},\ldots,m_{j})$ gathering such $\overline \Phi$ when  $\widetilde \phi_{1},\ldots,\widetilde\phi_{j}$ vary among $H_{m_{1}}[1/\sqrt{n}],\ldots,H_{m_{j}}[1/\sqrt{n}]$ respectively is not larger than $\prod_{\ell=1}^{j}\abs{H_{m_{\ell}}[1/\sqrt{n}]}$. Besides, by using that $\Phi\in\U$
\begin{eqnarray*}
d(\Phi_{\{1,\ldots,j\}},\overline \Phi_{\{1,\ldots,j\}})&\le& d(\Phi_{\{1,\ldots,j\}},(\widetilde\phi_{j})_{\ell=1,\ldots,j})+\inf_{\Psi\in \U} d((\widetilde\phi_{j})_{\ell=1,\ldots,j},\Psi_{\{1,\ldots,j\}})+{1\over \sqrt{n}}\\
&\le& 2d(\Phi_{\{1,\ldots,j\}},(\widetilde\phi_{j})_{\ell=1,\ldots,j})+{1\over \sqrt{n}}
\end{eqnarray*}
and hence, by using~\eref{rev-eq00}
\begin{eqnarray}
d^2(\Phi_{\{1,\ldots,j\}},\overline \Phi_{\{1,\ldots,j\}})&\le& 8\sum_{\ell=1}^{j}\norm{\phi_{j}-\widetilde\phi_{j}}^{2}+ {2\over n}\nonumber\\
&\le& 16\sum_{\ell=1}^{j}\inf_{\psi\in H_{m_{\ell}}}\norm{\phi_{\ell}-\psi}^{2}+{16j+2\over n}.\label{rev-eq0}
\end{eqnarray}

For all $\lambda\in\Lambda$, let us set $\widetilde \lambda=(\lambda_{\ell}\1_{\ell\le j})_{\ell\ge 1}\in\Lambda$. Since for all $\ell\ge 1$, $\lambda_{\ell}\in [0,1]$, we have
\begin{eqnarray}
\abs{\lambda-\widetilde \lambda}^{2}+\abs{\check{\lambda}-\check{\widetilde \lambda}}^{2}&=& \sum_{\ell\ge 1}\pa{\abs{\lambda_{\ell}-\widetilde \lambda_{\ell}}^{2}+\abs{\sqrt{1-\lambda_{\ell}^{2}}-\sqrt{1-\widetilde \lambda_{\ell}^{2}}}^{2}}\nonumber\\
&\le& \sum_{\ell>j}\pa{\lambda_{\ell}^{2}+\abs{1-\sqrt{1-\lambda_{\ell}^{2}}}^{2}}\nonumber\\
&\le&  \sum_{\ell>j}\pa{\lambda_{\ell}^{2}+{\lambda_{\ell}^{4}\over \pa{1+\sqrt{1-\lambda_{\ell}^{2}}}^{2}}}\nonumber\\
&\le& 2\sum_{\ell>j}\lambda_{\ell}^{2}.\label{rev-eq1}
\end{eqnarray}
By construction of $\Lambda_{j}[1/n]$, there exists $\overline \lambda\in \Lambda_{j}[1/n]$ such that $\abs{\widetilde \lambda_{\ell}-\overline \lambda_{\ell}}\le 1/n$ for all $\ell\ge 1$ and by using that for all $u,v\in[0,1]$, $\abs{\sqrt{1-u^{2}}-\sqrt{1-v^{2}}}\le \sqrt{2|u-v|}$, for such an element of $\Lambda_{j}[1/n]$, 
\begin{eqnarray}
\abs{\overline \lambda-\widetilde \lambda}^{2}+\abs{\check{\overline \lambda}-\check{\widetilde \lambda}}^{2}&\le& \sum_{\ell=1}^{j}\pa{\abs{\overline \lambda_{\ell}-\widetilde \lambda_{\ell}}^{2}+\abs{\sqrt{1-\overline \lambda_{\ell}^{2}}-\sqrt{1-\widetilde \lambda_{\ell}^{2}}}^{2}}\nonumber\\
&\le& 3\sum_{\ell=1}^{j} \abs{\overline \lambda_{\ell}-\widetilde \lambda_{\ell}}\le {3j\over n}.\label{rev-eq2}
\end{eqnarray}
By using~\eref{rev-eq1} and~\eref{rev-eq2} with the triangular inequality, we obtain that
\begin{equation}\label{rev-eq3}
\abs{\lambda-\overline \lambda}^{2}+\abs{\check{\lambda}-\check{\overline \lambda}}^{2}\le 4\sum_{\ell>j}\lambda_{\ell}^{2}+{6j\over n}.
\end{equation}

By combining~\eref{h-det} with~\eref{rev-eq0} and~ \eref{rev-eq3}, for all $\Phi\in\U$ and $\lambda\in\Lambda$, there exists $(\overline \Phi,\overline \lambda)\in \U(j,m_{1},\ldots,m_{j})\times \Lambda_{j}[1/n]$ such that 
\begin{eqnarray}
h^{2}(\Pi^{\Phi,\lambda},\Pi^{\overline \Phi,\overline \lambda})&\le& 2\cro{\abs{\lambda-\overline \lambda}^{2}+\abs{\check{\lambda}-\check{\overline \lambda}}^{2}}+5\sum_{j\ge 1}\overline \lambda_{j}^{2}\norm{\phi_{j}-\overline \phi_{j}}^{2}\nonumber\\
&\le& 8\sum_{\ell>j}\lambda_{\ell}^{2}+{12j\over n}+5d^{2}(\Phi_{\{1,\ldots,j\}},\overline \Phi_{\{1,\ldots,j\}})\nonumber\\
&\le& 8\sum_{\ell>j}\lambda_{\ell}^{2}+80\sum_{\ell=1}^{j}\inf_{\psi\in H_{m_{\ell}}}\norm{\phi_{\ell}-\psi}^{2}+{92j+10\over n}.\label{rev-eq000}
\end{eqnarray}

Let us now set 
\[
\MM=\bigcup_{j\ge 1}\bigcup_{m_{1}\in\M,\ldots,m_{j}\in\M}\{j\}\times\bigotimes_{\ell=1}^{j}\{m_{j}\}\times\U(j,m_{1},\ldots,m_{j})\times \Lambda_{j}[1/n]
\]
and for $\m=(j,m_{1},\ldots,m_{j},\Psi,\gamma)\in\MM$, $\Pi_{\m}=\Pi^{\Psi,\gamma}$ and 
\[
\pi'(\m)={1\over (2n)^{j}}\prod_{\ell=1}^{j}{\pi(m_{\ell})\over \abs{H_{m_{\ell}}[1/\sqrt{n}]}}.
\]
Since for all $j\ge 1$ and $m_{1},\ldots,m_{j}\in\M$, $\abs{\U(j,m_{1},\ldots,m_{j})}\le \prod_{\ell=1}^{j}\abs{H_{m_{\ell}}[1/\sqrt{n}]}$ and $\abs{\Lambda_{j}[1/n]}\le n^{j}$, 
\begin{eqnarray*}
\sum_{\m\in\MM}\pi'(\m)&\le& \sum_{j\ge 1}\sum_{m_{1},\ldots,m_{j}\in\M}\sum_{\Psi\in\U(j,m_{1},\ldots,m_{j})}\sum_{\gamma\in\Lambda_{j}[1/n]}{1\over (2n)^{j}}\prod_{\ell=1}^{j}{\pi(m_{\ell})\over \abs{H_{m_{\ell}}[1/\sqrt{n}]}}\\
&\le& \sum_{j\ge 1}{1\over 2^{j}}\pa{\sum_{m\in\M}\pi(m)}^{j}\le 1
\end{eqnarray*}
and hence, $\pi'$ is a sub-probability on $\MM$. By using Proposition~\ref{aggreg} with the family of densities $(\Pi_{\m})_{\m\in\MM}$ and the sub-probability $\pi'$ on $\MM$, we obtain an estimator $\widehat \Pi$ for which $\E\cro{h^{2}(\Pi,\widehat \Pi)}$ is, up to a universal constant $C>0$, not larger than 
\begin{eqnarray*}
h^{2}(\Pi,\Pi_{\m})+{\log(1/\pi'(\m))\over n}\le  2h^{2}(\Pi,\Pi^{\Phi,\lambda})+2h^{2}(\Pi^{\Phi,\lambda},\Pi_{\m})+{\log(1/\pi'(\m))\over n}
\end{eqnarray*}
whatever $(\Phi,\lambda)\in\U\times\Lambda$ and $\m\in\MM$. In particular by using~\eref{rev-eq000}, for any choices of $j\ge 1$ and $m_{1},\ldots,m_{j}\in\M$, there exists some $\m=(j,m_{1},\ldots,m_{j},\overline \Phi,\overline \lambda)\in\MM$ such that
\begin{eqnarray*}
\lefteqn{h^{2}(\Pi^{\Phi,\lambda},\Pi_{\m})+{\log(1/\pi'(\m))\over n}}\\
&=& h^{2}(\Pi^{\Phi,\lambda},\Pi^{\overline \Phi,\overline \lambda}) +{1\over n}\sum_{\ell=1}^{j}\log\pa{2\abs{H_{m_{\ell}}[1/\sqrt{n}]}n\over \pi(m_{\ell})}\\
&\le& 102\cro{\sum_{\ell>j}\lambda_{\ell}^{2}+\sum_{\ell=1}^{j}\pa{\inf_{\psi\in H_{m_{\ell}}}\norm{\phi_{\ell}-\psi}^{2}+{1\over n}\log\pa{2\abs{H_{m_{\ell}}[1/\sqrt{n}]}n\over \pi(m_{\ell})}+{1\over n}}}.
\end{eqnarray*}
Finally, we get the result from the fact that $\Phi,\lambda,j,m_{1},\ldots,m_{j}$ are arbitrary.

\subsection{Proof of Proposition~\ref{prop-approx}}\label{P-A}
For all $m\in\M$, $H_{m}[1/\sqrt{n}]$ is a $1/\sqrt{n}$- separated subset of the unit ball of a finite-dimensional linear space $S_{m}$ on $\R$ of dimension $D_{m}$. Consequently, for all $m\in\M$
\[
\log \abs{H_{m}[1/\sqrt{n}]}\le D_{m}\log(2\sqrt{n}+1).
\]
Let $a<1$. If $\inf_{\psi\in S_{m}}\norm{\phi-\psi}\ge a$, then by the triangular inequality, for all $\psi'$ in $H_{m}$
\[
\norm{\phi-\psi'}\le 2\le {2\over a}\inf_{\psi\in S_{m}}\norm{\phi-\psi}.
\]
Otherwise, there exists $\psi\in S_{m}$ such that $\norm{\phi-\psi}<a$. Hence, $\norm{\psi}\ge \norm{\phi}-\norm{\phi-\psi}\ge 1-a>0$. In particular, $\psi\neq 0$ and we may set $\psi'=\psi/\norm{\psi}\in H_{m}$. Since $\norm{\phi}=1$, we have
\begin{eqnarray*}
\norm{\phi-\psi'}&=&\norm{\phi-{\psi\over \norm{\psi}}}=\norm{\phi-{\phi\over \norm{\psi}}+{\phi\over \norm{\psi}}-{\psi\over \norm{\psi}}}\\
&\le& \abs{{\norm{\psi}-\norm{\phi}\over\norm{\psi}}}+{1\over\norm{\psi}}\norm{\phi-\psi}\\
&\le& {2\norm{\phi-\psi}\over \norm{\psi}}\le {2\norm{\phi-\psi}\over 1-a}.
\end{eqnarray*}
We get the result by choosing $a=1/2$.

\subsection{Proofs of Propositions~\ref{rate-proj} and~\ref{prop-rate}}\label{S-R}
By using the collections of linear spaces $\IS$ and our choice of $\pi$, and by using  some classical optimization with respect to $m\in\M$, we get that for all $j\ge 1$
\[
O(\IS,\pi,\phi_{j})\le C (\log n/n)^{2\overline \beta/(2\overline \beta+k)}
\]
where $C$ is a positive constant depending on $R,k,\beta$ and $p$. Up to the logarithmic factor, this bounds correspond to the usual estimation rate over $\B^{\beta}_{p,p}([0,1]^{k})$. When $\lambda=(\1_{j\le j_{0}})_{j\ge 1}$, $\Pi^{\Phi,\lambda}=\Pi^{\Phi}_{\{1,\ldots,j_{0}\}}$ and Corollary~\ref{maincor} leads to Proposition~\ref{rate-proj}. For all $\lambda\in\Lambda_{\alpha}^{(a)}(A)$, we get from Corollary~\ref{maincor} that
\begin{eqnarray*}
C'\E\cro{h^{2}(\Pi,\widehat \Pi)}&\le& \inf_{j\ge 1}\cro{j(\log n/n)^{2\overline \beta/(2\overline \beta+k)}+Aj^{-\alpha}}.
\end{eqnarray*}
The minimum is achieved for $j$ of order $(n/\log n)^{2\overline \beta/[(2\overline \beta+k)(1+\alpha)]}$, which leads to the rate $(\log n/n)^{2\alpha\overline \beta/[(2\overline \beta+k)(1+\alpha)]}$ as claimed. 
The other rate is obtained by arguing similarly and by choosing $j$ of order 
\[
{2\overline \beta\over \alpha(2\overline \beta+k)}\log n.
\]

\thanks{{\bf Acknowledgments} I would like to thank Michel Merle for the very stimulating discussion we had about the identifiability problem raised in Section~\ref{S-EA}.}

\bibliographystyle{apalike}

\end{document}